\documentclass[11pt]{article}
\usepackage[english]{babel}
\usepackage{amssymb,amsmath,amsthm}
\usepackage[dvipsnames]{pstricks}
\textwidth=178truemm \textheight=250truemm \voffset-2.5truecm
\hoffset-2.5truecm
\parindent=16pt
\hfuzz10pt

\newtheorem{theorem}{Theorem}[section]
\newtheorem{lemma}[theorem]{Lemma}
\newtheorem{proposition}[theorem]{Proposition}
\newtheorem{corollary}[theorem]{Corollary}
\newtheorem{question}[theorem]{Question}

{\theoremstyle{definition}\newtheorem{definition}[theorem]{Definition}}
{\theoremstyle{definition}\newtheorem{remark}[theorem]{Remark}}

\numberwithin{equation}{section}

\def\C{{\mathbb C}}
\def\N{{\mathbb N}}
\def\Z{{\mathbb Z}}
\def\R{{\mathbb R}}
\def\D{{\mathbb D}}
\def\T{{\mathbb T}}
\def\DD{\overline{\mathbb D}}
\def\hh{{\mathcal H}}
\def\pp{{\mathbb P}}

\def\aa{{\mathbb A}}
\def\epsilon{\varepsilon}
\def\phi{\varphi}
\def\leq{\leqslant}
\def\geq{\geqslant}
\def\ker{{\tt ker}\,}
\def\spann{\hbox{\tt span}\,}
\def\im{\hbox{\tt Im}\,}
\def\det{{\tt det}\,}

\def\ilim{\mathop{\hbox{$\underline{\hbox{\rm lim}}$}}\limits}
\def\slim{\mathop{\hbox{$\overline{\hbox{\rm lim}}$}}\limits}
\def\ssub#1#2{#1_{{}_{{\scriptstyle #2}}}}

\title{Chaotic Banach algebras}

\author{Stanislav Shkarin}

\date{}

\begin{document}

\maketitle

\begin{abstract}
We construct an infinite dimensional non-unital Banach algebra $A$
and $a\in A$ such that the sets $\{za^n:z\in\C,\ n\in\N\}$ and
$\{({\bf 1}+a)^na:n\in\N\}$ are both dense in $A$, where $\bf 1$ is
the unity in the unitalization $A^{\#}=A\oplus \spann\{{\bf 1}\}$ of
$A$. As a byproduct, we get a hypercyclic operator $T$ on a Banach
space such that $T\oplus T$ is non-cyclic and $\sigma(T)=\{1\}$.
\end{abstract}
\small \noindent{\bf MSC:} \ \ 47A16, 46J45

\noindent{\bf Keywords:} \ \ Hypercyclic operators; supercyclic
operators; Banach algebras  \normalsize

\section{Introduction \label{s1}}\rm

All vector spaces in this article are over the field $\C$ of complex
numbers. As usual, $\R$ is the field of real numbers,
$\T=\{x\in\C:|z|=1\}$, $\D=\{z\in\C:|z|<1\}$, $\DD=\{z\in\C:|z|\leq
1\}$, $\R_+=[0,\infty)$, $\N$ is the set of positive integers and
$\Z_+=\N\cup\{0\}$. If $X$ and $Y$ are topological vector spaces,
$L(X,Y)$ stands for the space of continuous linear operators from
$X$ to $Y$. We write $L(X)$ instead of $L(X,X)$ and $X^*$ instead of
$L(X,\C)$. For $T\in L(X,Y)$, the dual operator $T^*\in L(Y^*,X^*)$
is defined as usual: $T^*f=f\circ T$. Recall that $T\in L(X)$ is
called {\it hypercyclic} (respectively, {\it supercyclic}) if there
is $x\in X$ such that the {\it orbit} $O(T,x)=\{T^nx:n\in\Z_+\}$
(respectively, the {\it projective orbit} $\{zT^nx:z\in\C,\
n\in\Z_+\}$) is dense in $X$. Such an $x$ is called a {\it
hypercyclic vector} (respectively, a {\it supercyclic vector}) for
$T$. We refer to \cite{bama-book} and references therein for
additional information on hypercyclicity and supercyclicity. Recall
that a function $\pi:A\to\R_+$ defined on a complex algebra $A$ is
called {\it submultiplicative} if $\pi(ab)\leq \pi(a)\pi(b)$ for any
$a,b\in A$. A {\it Banach algebra} is a complex (maybe non-unital)
algebra $A$ with a complete submultiplicative norm (if $A$ is
unital, it is usually also assumed that $\|{\bf 1}\|=1$, where $\bf
1$ is the unity in $A$). We say that $A$ is {\it non-trivial} if
$A\neq\{0\}$.

\begin{definition}\label{sba} Let $A$ be a Banach algebra.
We say that $A$ is {\it supercyclic} if there is $a\in A$ for which
$\{za^n:z\in\C,\ n\in\N\}$ is dense in $A$. Such an $a$ is called a
{\it supercyclic element} of $A$. We say that $A$ is {\it almost
hypercyclic} if there is $a\in A$ for which $\{({\bf
1}+a)^na:n\in\N\}$ is dense in $A$. Such an $a$ is called an {\it
almost hypercyclic element} of $A$. Finally, we say that a Banach
algebra $A$ is {\it chaotic} if there is $a\in A$ which is a
supercyclic and an almost hypercyclic element of $A$. In other
words, both $\{za^n:z\in\C,\ n\in\N\}$ and $\{({\bf
1}+a)^na:n\in\N\}$ are dense in $A$. Such an $a$ is called a {\it
chaotic element} of $A$.
\end{definition}

In the above definition $\bf 1$ is the unit element in the
unitalization $A^{\#}=A\oplus \spann\{{\bf 1}\}$ of $A$. Note that
$a$ is a supercyclic element of $A$ if and only if $a$ is a
supercyclic vector for the multiplication operator
\begin{equation}\label{ma}
\text{$M_a\in L(A)$, \ \ $M_ab=ab$}
\end{equation}
and $a$ is an almost hypercyclic element of $A$ if and only if $a$
is a hypercyclic vector for $I+M_a$. There is no point to consider
'hypercyclic Banach algebras' in the obvious sense. Indeed, in
\cite{shka} it is observed that a multiplication operator on a
commutative Banach algebra is never hypercyclic. Obviously,
supercyclic as well as almost hypercyclic Banach algebras are
commutative and separable.

\begin{theorem}\label{main} There exists a chaotic infinite dimensional
Banach algebra $A$.
\end{theorem}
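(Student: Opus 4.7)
As observed right after Definition~\ref{sba}, $a\in A$ is chaotic precisely when $a$ is simultaneously a supercyclic vector of $M_a\in L(A)$ and a hypercyclic vector of $I+M_a$, so the theorem reduces to constructing an infinite dimensional commutative Banach algebra $A$ together with a single element $a\in A$ realising both cyclic behaviours. Once such a pair is in hand, the byproduct from the abstract is automatic for $T:=I+M_a$: hypercyclicity of $T$ with vector $a$ is exactly almost hypercyclicity of $a$; the identity $\sigma(T)=\{1\}$ follows by arranging $\|a^n\|^{1/n}\to 0$, which makes $M_a$ quasinilpotent; and $T\oplus T$ is non-cyclic by a commutativity argument---for $p\in\C[z]$ and $\tilde q(z):=p(1+z)$ one has $p(T)u=\tilde q(a)u$, hence $p(T)u\cdot v=\tilde q(a)uv=p(T)v\cdot u$ in the commutative algebra, so the bilinear map $\Psi(x,y):=xv-yu$ vanishes on every $T\oplus T$-orbit, while $\ker\Psi$ is a proper closed subspace of $A\oplus A$ for any $(u,v)\ne(0,0)$.

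To build $(A,a)$, I would take $A$ to be the completion of $\C_0[t]:=t\C[t]$ under a submultiplicative norm $\|\cdot\|$ to be constructed, and set $a:=t$; then $a^n=t^n$, $M_a$ acts on the polynomial dense subalgebra as $p\mapsto tp$, and $(1+t)^nt$ is a specific element of $\C_0[t]$. The norm must simultaneously enforce: (i)~submultiplicativity, $\|pq\|\leq\|p\|\|q\|$; (ii)~quasinilpotence, $\|t^n\|^{1/n}\to 0$; (iii)~density in $A$ of $\{\lambda t^n:\lambda\in\C,\ n\geq 1\}$; (iv)~density in $A$ of $\{(1+t)^nt:n\geq 1\}$. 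Fixing an enumeration $(q_j)_{j\geq 1}$ of the polynomials in $\C_0[t]$ with Gaussian-rational coefficients, I would construct $\|\cdot\|$ inductively along two disjoint sequences of indices $(n_j),(m_j)\subset\N$, arranging at the $j$-th stage that $\|\lambda_j t^{n_j}-q_j\|<2^{-j}$ for a suitable $\lambda_j\in\C$ and $\|(1+t)^{m_j}t-q_j\|<2^{-j}$.

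The main obstacle is the coexistence of (iii) with (i)--(ii). In ``standard'' completions of $\C_0[t]$ (say, weighted $\ell^1$-convolution algebras $\|\sum c_nt^n\|=\sum|c_n|w_n$), the monomials $t^n$ form a Schauder basis, so their scalar multiples cannot approximate polynomials with several nonzero coefficients and condition (iii) is impossible. The norm must therefore be engineered so that the normalised vectors $t^n/\|t^n\|$ densely sample the projective unit sphere of $A$, which is delicate because (i)--(ii) already force $\|t^n\|$ to decay faster than any geometric sequence. A natural vehicle is to define $\|\cdot\|$ as a supremum $\|p\|=\sup_j N_j(p)$ of a countable family of submultiplicative seminorms, each $N_j$ calibrated at step $j$ so that $\lambda_j t^{n_j}-q_j$ is small in $N_j$ while the supremum remains submultiplicative; the parallel bound for $(1+t)^{m_j}t-q_j$ is obtained from a Gethner--Shapiro/Kitai-type hypercyclicity criterion applied to $I+M_a$. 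The technical heart of the proof is verifying that the resulting supremum is finite on $\C_0[t]$, gives a complete submultiplicative norm on the completion, and still satisfies $\|t^n\|^{1/n}\to 0$, so that all four requirements hold in the finished Banach algebra.
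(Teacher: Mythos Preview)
Your opening paragraph is fine: the reduction to building a submultiplicative norm on $\C_0[t]$ with $a=t$, the commutativity argument for non-cyclicity of $T\oplus T$, and the spectrum computation are all correct and match the paper. You have also correctly diagnosed why weighted $\ell_1$-type norms cannot work.

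The genuine gap is in your construction mechanism. Defining $\|p\|=\sup_j N_j(p)$ pushes in the wrong direction: to force $\|\lambda_j t^{n_j}-q_j\|<2^{-j}$ you would need this element to be small in \emph{every} $N_i$, in particular in the seminorms $N_1,\dots,N_{j-1}$ that were fixed before $n_j$ and $\lambda_j$ were chosen. Unless each $N_i$ already individually has the property that scalar multiples of high monomials are dense (which is the whole problem), the supremum will not inherit it. The paper does the opposite: it builds a pointwise \emph{decreasing} sequence of submultiplicative norms $\pi_{\xi_{[k]}}$ and takes the limit $\pi=\lim_k\pi_{\xi_{[k]}}$. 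Smallness of a given element then only has to be achieved at a single stage, and the real difficulty shifts to keeping the limit nonzero (the paper maintains $\pi_{\xi_{[k]}}(z)>\tfrac12$ throughout).

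The engine that produces the decreasing sequence is the device you are missing. For each $k$ one embeds $\pp$ into the quotient $\aa^{[k]}/I_\xi$, where $\aa^{[k]}=\ell_1(\Z_+^k)$ and $\Phi_\xi(a)=a(\xi_1(z),\dots,\xi_k(z))$; enlarging $k$ by adjoining a new variable $u_{k+1}$ with $\Phi_{\xi'}(u_{k+1})=\xi_{k+1}$ can only decrease the quotient norm, and automatically gives $\pi_{\xi_{[k+1]}}(\xi_{k+1})\le 1$. Choosing $\xi_{2k}=2k(c_{2k}z^{n_{2k}}-p_k)$ and $\xi_{2k+1}=(2k+1)\bigl(z(1+z)^{n_{2k+1}}-p_k\bigr)$ then forces the two required approximations, while a separate argument (Lemmas~3.2--3.5 in the paper, ultimately resting on the argument principle to locate zeros of $z(1+z)^n-p$) guarantees the lower bound on $\pi_{\xi_{[k]}}(z)$. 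Your appeal to a Gethner--Shapiro/Kitai criterion is a non-sequitur here: that criterion verifies hypercyclicity of a \emph{given} operator on a \emph{given} space, it does not tell you how to manufacture a norm with the desired properties, and for $I+M_a$ on a radical algebra the backward-orbit condition $\|(I+M_a)^{-n}p\|\to0$ is in any case not available for free.
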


In order to emphasize the value of Theorem~\ref{main}, we would like
to mention few related facts. A Banach algebra is called {\it
radical} if it coincides with its Jackobson radical \cite{dales}. If
$A$ is a Banach algebra and $X$ is a Banach $A$-bimodule
\cite{dales}, then $D\in L(A,X)$ is called a {\it derivation} if
$D(ab)=(Da)b+a(Db)$ for each $a,b\in A$. A Banach algebra $A$ is
called {\it weakly amenable} if every derivation $D:A\to A^*$ (with
the natural bimodule structure on $A^*$) has the shape $Da=ax-xa$
for some $x\in A^*$. It is well-known \cite{dales} that a
commutative Banach algebra $A$ is weakly amenable if and only if
there is no non-zero derivations $D:A\to X$ taking values in a
commutative Banach $A$-bimodule $X$.

\begin{theorem}\label{prop}
Let $A$ be a supercyclic Banach algebra of dimension $>1$. Then $A$
is infinite dimensional, radical and weakly amenable.
\end{theorem}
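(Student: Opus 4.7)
The plan is to handle the three conclusions of Theorem~\ref{prop} sequentially. Let $a \in A$ be a supercyclic element, so the operator $M_a$ from \eqref{ma} is supercyclic with vector $a$; I freely use the commutativity and separability of $A$ remarked after Definition~\ref{sba}.

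\emph{Infinite-dimensionality} follows from the classical fact that no supercyclic operator exists on a complex Banach space of finite dimension $\geq 2$ (a direct verification via Jordan canonical form, inspecting the projective orbit in $\mathbb{CP}^{n-1}$). If $\dim A$ were finite with $\dim A \geq 2$, then $M_a$ would be such an operator, giving a contradiction.

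For \emph{radicality}, I combine two operator-theoretic inputs with Gelfand theory. Any character $\phi$ of $A$ satisfies $M_a^{*}\phi = \phi(a)\phi$, so $\phi$ is an eigenvector of $M_a^{*}$ with eigenvalue $\lambda := \phi(a)$; applying $\phi$ to the dense set $\{za^n\}$ forces $\lambda \neq 0$, as $\phi$ cannot vanish on a dense subspace. By the Hilden--Wallen theorem, the adjoint of a supercyclic operator on a complex Banach space of dimension $\geq 2$ admits at most one eigenvalue, so $A$ has at most one character. If $A$ has a unique character $\phi$, then the unitalization $A^{\#}$ has precisely the two characters $\tilde\phi$ (extending $\phi$) and the augmentation $\chi$ (with $\chi(a) = 0$), so Gelfand's theorem yields $\sigma_{A^{\#}}(a) = \{0, \lambda\}$; since $A$ is a closed ideal in the non-unital $A^{\#}$, a direct check gives $\sigma(M_a) = \sigma_{A^{\#}}(a) = \{0, \lambda\}$. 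Its two singleton components sit on circles of differing radii $0$ and $|\lambda| > 0$, contradicting the Ansari--Bourdon spectral theorem that every component of the spectrum of a supercyclic operator must meet a common circle centered at the origin. (Equivalently, the Riesz spectral decomposition $A = A_0 \oplus A_\lambda$ produces a non-zero quasi-nilpotent summand $A_0$ on which a growth estimate shows that the projective orbit of $a$ collapses to $\{0\}$, contradicting density.) Hence $A$ has no character, i.e.\ $A$ is radical.

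For \emph{weak amenability}, let $D\colon A \to X$ be a continuous derivation into a commutative Banach $A$-bimodule $X$. Induction on the derivation identity yields $D(a^n) = n a^{n-1} D(a)$ for all $n \geq 1$. A Baire-category argument using the infinite-dimensionality of $A$ shows that $\{za^n : n \geq N\}$ remains dense in $A$ for every $N$, because its complement inside $\{za^n : n \geq 1\}$ lies in the finite-dimensional subspace $\spann\{a, a^2, \dots, a^{N-1}\}$, which is nowhere dense in $A$. Diagonalizing over $N$, choose $z_k a^{n_k} \to a$ with $n_k \to \infty$. Suppose toward contradiction that $D(a) \neq 0$. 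Continuity of $D$ gives $n_k z_k a^{n_k-1} D(a) = D(z_k a^{n_k}) \to D(a)$ in $X$, so dividing by $n_k \to \infty$ yields $z_k a^{n_k-1} D(a) \to 0$ in $X$. Acting by $a$ via the module structure one obtains $z_k a^{n_k} D(a) = a\bigl(z_k a^{n_k-1} D(a)\bigr) \to 0$, and simultaneously $z_k a^{n_k} D(a) \to a D(a)$ by continuity of the module action together with $z_k a^{n_k} \to a$; hence $a D(a) = 0$. Then $D(a^n) = n a^{n-2}\bigl(a D(a)\bigr) = 0$ for all $n \geq 2$, and approximating any $b \in A$ by $z_k a^{n_k}$ with $n_k \geq 2$ forces $D(b) = \lim z_k D(a^{n_k}) = 0$; in particular $D(a) = 0$, the desired contradiction. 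Thus $D(a) = 0$, whence $D$ vanishes on the dense set $\{za^n\}$ and $D \equiv 0$. The main obstacle is the radicality step, where the correct spectral dichotomy has to be extracted by combining Hilden--Wallen, Ansari--Bourdon, and the Gelfand description of $\sigma_{A^{\#}}(a)$; the weak amenability part, while requiring care with the module action and the diagonalization with $n_k \to \infty$, reduces to clean bookkeeping around $D(a^n) = n a^{n-1} D(a)$.
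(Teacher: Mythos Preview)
Your infinite-dimensionality step matches the paper's. Your weak-amenability argument is correct and in fact cleaner than the paper's: you extract $aD(a)=0$ by a direct limit argument (choose $z_ka^{n_k}\to a$ with $n_k\to\infty$, use continuity of $D$, divide by $n_k$, then act by $a$) and conclude $D(a^n)=na^{n-2}\bigl(aD(a)\bigr)=0$ for $n\geq 2$, whereas the paper passes to a dual functional $g=M^*f\in A^*$ and exhibits a nonempty open set disjoint from the tail $\Omega_m=\{za^n:n\geq m\}$. Both routes rest on $D(a^n)=na^{n-1}D(a)$ and the density of the tails, but yours avoids the dual-space detour entirely.

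The radicality step, however, has a genuine gap. (Aside: ``the non-unital $A^{\#}$'' is surely a slip, since $A^{\#}$ is unital by construction.) Your claim $\sigma(M_a)=\sigma_{A^{\#}}(a)=\{0,\lambda\}$ requires $0\in\sigma(M_a)$; the ``direct check'' gives only $\sigma(M_a)\subseteq\sigma_{A^{\#}}(a)$, with equality precisely when $M_a$ is not invertible on $A$. Now $M_a$ invertible on $A$ forces $A$ to be unital: surjectivity yields $e\in A$ with $ae=a$, and then injectivity gives $eb=b$ for all $b$ via $a(eb)=(ae)b=ab$. You have not ruled out this unital scenario. If $A$ were unital, the unique character $\phi$ would give $\sigma_A(a)=\{\phi(a)\}=\{\lambda\}$, so $a$ would be invertible in $A$ and $\sigma(M_a)=\{\lambda\}$, a single point; neither the Ansari--Bourdon circle condition nor your Riesz-decomposition alternative produces any contradiction from a one-point spectrum. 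The paper sidesteps all of this by quoting \cite[Proposition~3.4]{shka}; your more self-contained approach would need an additional argument excluding the unital case before the spectral dichotomy can be invoked.
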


According to Theorem~\ref{prop}, Theorem~\ref{main} provides an
infinite dimensional radical weakly amenable Banach algebra. We
would like to mention the work \cite{read1} by Loy, Read, Runde, and
Willis, who constructed a non-unital Banach algebra, generated by
one element $x$ and which has a bounded approximate identity of the
shape $x^{n_k}/\|x^{n_k}\|$, where $\{n_k\}_{k\in\N}$ is a strictly
increasing sequence of positive integers. Such an algebra is
automatically radical and weakly amenable. Theorem~\ref{prop} shows
that the same properties are forced by supercyclicity. It is also
worth mentioning that Read \cite{read2} constructed a commutative
amenable radical Banach algebra, but this algebra is not generated
by one element.

\begin{proposition}\label{qui} Let $A$ be a non-trivial commutative Banach
algebra and $M=cI+M_a\in L(A)$, where $a\in A$ and $c\in\C$. Then
$M\oplus M$ is non-cyclic.
\end{proposition}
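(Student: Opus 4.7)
The plan is to argue by contradiction: assume some $(x,y)\in A\oplus A$ is a cyclic vector for $M\oplus M$, and deduce that both $x$ and $y$ must annihilate $A$. This will collapse the cyclic subspace to a line, contradicting non-triviality of $A$.

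First I would describe the cyclic subspace concretely. Because $A$ is commutative, $cI$ and $M_a$ commute, so $M^nz=\sum_{k=0}^n\binom{n}{k}c^{n-k}a^kz$ (with the convention $a^0z:=z$). Since $\{(c+t)^n\}_{n\in\Z_+}$ is a basis of $\C[t]$, this gives
\[
\spann\{(M^nx,M^ny):n\in\Z_+\}=\{(p(a)x,p(a)y):p\in\C[t]\},
\]
where $p(a)z$ is shorthand for $p(0)z+\sum_{k\geq 1}p_ka^kz$ when $p(t)=\sum_k p_kt^k$. The task is therefore to prove that this set is not dense in $A\oplus A$.

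Next I would exploit commutativity. If $(x,y)$ were cyclic, then for every $u\in A$ there would exist polynomials $p_n\in\C[t]$ with $p_n(a)x\to u$ and $p_n(a)y\to 0$. The key identity is
\[
y\cdot p_n(a)x=x\cdot p_n(a)y,
\]
which holds because $y\cdot a^kx=a^k(xy)=x\cdot a^ky$ for $k\geq 1$ and $yx=xy$ for $k=0$. Passing to the limit, the left-hand side tends to $yu$ while the right-hand side tends to $0$, so $yu=0$ for every $u\in A$; that is, $yA=0$. Approximating $(0,v)$ instead yields $xA=0$ by the same argument.

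Finally, $xA=yA=0$ forces $a^kx=0=a^ky$ for every $k\geq 1$, so $p(a)x=p(0)x$ and $p(a)y=p(0)y$ for all polynomials $p$. Hence the cyclic subspace degenerates to $\C\cdot(x,y)$, which has dimension at most one, whereas $\dim(A\oplus A)\geq 2$ because $A$ is non-trivial; this is the desired contradiction. I do not anticipate a real obstacle: the whole argument pivots on the commutativity identity $y\cdot p_n(a)x=x\cdot p_n(a)y$, and everything else is bookkeeping.
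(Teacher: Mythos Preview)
Your proof is correct and rests on exactly the same observation as the paper's: the commutativity identity $y\cdot p(a)x = x\cdot p(a)y$, which the paper packages as the statement that the orbit of $(x,y)$ lies in the kernel of the bounded operator $T(u,v)=yu-xv$. The paper argues directly---if $T\neq 0$ then $\ker T$ is a proper closed subspace containing the orbit, and if $T=0$ (i.e.\ $M_x=M_y=0$) the orbit is one-dimensional---whereas you reach the same dichotomy by contradiction; the substance is identical.
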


\begin{proof}Let $(x,y)\in A^2$. If $M_x=M_y=0$, then
$(M\oplus M)^n(x,y)=c^n(x,y)$ for every $n\in\Z_+$ and therefore
$(x,y)$ is not a cyclic vector for $M_a\oplus M_a$. Otherwise, the
operator $T\in L(A^2,A)$, $T(u,v)=yu-xv$ is non-zero. Moreover,
$T((M\oplus M)^n(x,y))=T((c{\bf 1}+a)^nx,(c{\bf 1}+a)^ny)=y(c{\bf
1}+a)^nx-x(c{\bf 1}+a)^ny=0$ since $A$ is commutative. Thus
$(M\oplus M)^n(x,y)\in\ker T$ for each $n\in\Z_+$. Since $\ker T$ is
a proper closed linear subspace of $A^2$, $(x,y)$ again is not a
cyclic vector for $M\oplus M$.
\end{proof}

By Proposition~\ref{qui}, Theorem~\ref{main} provides hypercyclic
operators $T$ with non-cyclic $T\oplus T$. The existence of such
operators used to be an open problem until De~La~Rosa and Read
\cite{read00} (see also \cite{bama} and \cite{bama-book})
constructed such operators. One can observe that the spectra of the
operators in \cite{read00,bama} contain a disk centered at 0 of
radius $>1$. On the other hand \cite{bama-book}, any separable
infinite dimensional complex Banach space  supports hypercyclic
operators with the spectrum being the singleton $\{1\}$. It remained
unclear whether a hypercyclic operator $T$ with non-cyclic $T\oplus
T$ can have small spectrum. Theorem~\ref{main} provides such an
operator. Indeed, by Theorem~\ref{main}, there are an infinite
dimensional Banach algebra $A$ and $a\in A$ such that $T=I+M_a$ is
hypercyclic. By Theorem~\ref{prop}, $A$ is radical and therefore
$M_a$ is quasinilpotent. Hence the spectrum $\sigma(T)$ of $T$ is
$\{1\}$. Thus we arrive to the following corollary.

\begin{corollary}\label{smallsp} There exists a hypercyclic continuous
linear operator $T$ on an infinite dimensional Banach space such
that $T\oplus T$ is non-cyclic and $\sigma(T)=\{1\}$.
\end{corollary}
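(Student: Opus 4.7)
The plan is to read off the corollary directly from the three previous results, so my proposal is essentially an orchestration rather than a new argument. First I would invoke Theorem~\ref{main} to obtain an infinite dimensional chaotic Banach algebra $A$ together with a chaotic element $a\in A$, and set $T=I+M_a\in L(A)$, viewing $A$ simply as a (separable) Banach space. Since the chaoticity of $a$ includes the density of $\{({\bf 1}+a)^n a:n\in\N\}$ in $A$, and since $(I+M_a)^n a=({\bf 1}+a)^n a$ (the equality takes place in $A$, not in $A^{\#}$, because $a\in A$ is absorbed by the product), the vector $a$ is a hypercyclic vector for $T$. Thus $T$ is a hypercyclic operator on the infinite dimensional Banach space $A$.

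Next I would apply Proposition~\ref{qui} with $c=1$ to the commutative non-trivial algebra $A$: it says that $T\oplus T=(I+M_a)\oplus(I+M_a)$ is non-cyclic, which gives the second required property.

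Finally, for the spectral claim, I would observe that Theorem~\ref{prop} applies (the dimension of $A$ is $>1$ because it is infinite dimensional), so $A$ is radical. Since every element of a radical Banach algebra is quasinilpotent and this is inherited by the corresponding multiplication operator (so that $\sigma(M_a)=\{0\}$), the spectral mapping theorem for the polynomial $z\mapsto 1+z$ yields $\sigma(T)=1+\sigma(M_a)=\{1\}$.

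There is no real obstacle here beyond an honest check that (i) $A$ being radical does force $M_a$ to be quasinilpotent, which follows from $\|M_a^n\|\le\|a^n\|$ together with $\|a^n\|^{1/n}\to 0$ for radical $a$, and (ii) the operator produced acts on a Banach space, not on a mere algebra — but $A$ with its Banach algebra norm is a Banach space, so this is automatic. All the substance has been displaced into Theorem~\ref{main} and Theorem~\ref{prop}; the corollary is a short assembly of these with Proposition~\ref{qui}.
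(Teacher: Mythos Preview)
Your proposal is correct and follows essentially the same route as the paper: invoke Theorem~\ref{main} to get $A$ and $a$, set $T=I+M_a$, use almost hypercyclicity for the hypercyclicity of $T$, Proposition~\ref{qui} for non-cyclicity of $T\oplus T$, and Theorem~\ref{prop} (radicality $\Rightarrow$ $M_a$ quasinilpotent) for $\sigma(T)=\{1\}$. You have in fact spelled out a couple of verifications (e.g.\ $\|M_a^n\|\le\|a^n\|$) that the paper leaves implicit, so there is nothing to add.
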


It seems to be of independent interest that supercyclic operators
$T$ with non-cyclic $T\oplus T$ can be found among multiplication
operators on commutative Banach algebras, while hypercyclic
operators $T$ with non-cyclic $T\oplus T$ can be of the shape
identity plus a multiplication operator.

\section{Proof of Theorem~\ref{prop}}

Since a Banach space of finite dimension $>1$ supports no
supercyclic operators (see \cite{ww}), a supercyclic Banach algebra
of dimension $>1$ must be infinite dimensional. According to
\cite[Proposition~3.4]{shka}, an infinite dimensional commutative
Banach algebra $B$ is radical if there is $b\in B$ for which the
multiplication operator $M_b$ is supercyclic. Since a supercyclic
Banach algebra of dimension $>1$ is infinite dimensional,
commutative and has a supercyclic multiplication operator, $A$ is
radical.

It remains to show that that $A$ is weakly amenable. Assume the
contrary. Then there is a commutative Banach $A$-bimodule $X$ and a
non-zero derivation $D\in L(A,X)$. Since $A$ is supercyclic, there
is $a\in A$ such that $\{za^n:z\in\C,\ n\in\N\}$ is dense in $A$.
Since $\dim A>1$, $\Omega_m=\{za^n:z\in\C,\ n\geq m\}$ is dense in
$A$ for each $m\in\N$. Consider the operator $M\in L(A,X)$,
$Mb=bDa$. Since $X$ is commutative and $D$ is a derivation, we have
$D(a^n)=na^{n-1}Da$ for $n\geq 2$. If $M=0$, then
$D(a^n)=na^{n-1}Da=nM(a^{n-1})=0$ for $n\geq 2$. Hence $D$ vanishes
on the dense set $\Omega_2$. Since $D$ is continuous, $D=0$, which
is a contradiction. Hence $M\neq 0$ and therefore $M^*\neq 0$. Thus
there is $f\in X^*$ such that $g=M^*f^*$ is a non-zero element of
$A^*$. Then for each $n\in\N$, we have
$g(a^n)=M^*f(a^n)=f(a^nDa)=\frac{f(D(a^{n+1}))}{n+1}$. Hence
$$
\text{$\textstyle|g(a^n)|=\frac{|f(D(a^{n+1}))|}{n+1}\leq
\frac{C\|a^n\|}{n+1}$,\quad where $C=\|D\|\,\|f\|\,\|a\|$.}
$$
Now let $m\in\N$ be such that $\frac{C}{m+1}<\frac{\|g\|}{2}$ and
$W=\bigl\{u\in A:|g(u)|>\frac{\|g\|\|u\|}{2}\bigr\}$. Clearly $W$ is
non-empty and open. By the last display, $\Omega_m\cap
W=\varnothing$, which contradicts the density of $\Omega_m$ in $A$.
This contradiction completes the proof of Theorem~\ref{prop}.

\section{Proof of Theorem~\ref{main}}

From now on, $\pp$ is the algebra $\C[z]$ of polynomials with
complex coefficients in one variable $z$. Clearly,
$\pp_0=\{p\in\pp:p(0)=0\}$ is an ideal in $\pp$ of codimension $1$.
There is a sequence $\{p_n\}_{n\in\N}$ in $\pp_0$ such that
\begin{equation}
\text{$\{p_n:n\in\N\}$ is dense in $\pp_0$ with respect to any
seminorm on $\pp_0$.}\label{ppp1}
\end{equation}
Indeed, (\ref{ppp1}) is satisfied if, for instance, $\{p_n:n\in\N\}$
is the set of all polynomials in $\pp_0$ with coefficients from a
fixed dense countable subset of $\C$, containing $0$.

\begin{lemma}\label{z2}Let $\pi$ be a non-zero submultiplicative
seminorm on $\pp_0$ and $\{p_k\}_{k\in\N}$ is a sequence in $\pp_0$
satisfying $(\ref{ppp1})$. Assume also that there exist sequences
$\{n_k\}_{k\in\N}$ and $\{m_k\}_{k\in\N}$ of positive integers and a
sequence $\{c_k\}_{k\in\N}$ of complex numbers such that
$\pi(c_kz^{n_k}-p_k)\to 0$ and $\pi(z(1+z)^{m_k}-p_k)\to 0$. Then
$\pi$ is a norm and the completion $A$ of $(\pp_0,\pi)$ is an
infinite dimensional chaotic Banach algebra with $z$ as a chaotic
element.
\end{lemma}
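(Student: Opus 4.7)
The plan is to first derive both density properties inside the completion, and only then address whether $\pi$ separates points. The submultiplicativity of $\pi$ lets multiplication extend by continuity to the completion $A$, making $A$ a Banach algebra. Property (\ref{ppp1}) forces $\{p_k\}$ to be dense in $(\pp_0,\pi)$, and removing any finite initial segment leaves a dense set. Given $q\in\pp_0$ and $\epsilon>0$, I would choose $K$ with $\pi(c_kz^{n_k}-p_k)<\epsilon$ for all $k\geq K$ and then select $k\geq K$ with $\pi(p_k-q)<\epsilon$; the triangle inequality gives $\pi(c_kz^{n_k}-q)<2\epsilon$. Hence $\{\lambda z^n:\lambda\in\C,\ n\in\N\}$ is dense in $A$. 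The same bookkeeping applied to $\pi(z(1+z)^{m_k}-p_k)\to 0$ shows that $\{z(1+z)^m:m\in\N\}$ is dense in $A$. Thus, once $\pi$ is known to be a norm, the image of $z$ is automatically a chaotic element of $A$.

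Next I would prove $N:=\ker\pi=\{0\}$. Since $N$ is a $\C$-subspace of $\pp_0$ closed under multiplication by $\pp_0$, and the decomposition $\pp=\C\cdot 1\oplus\pp_0$ is available, $N$ is actually an ideal of $\pp=\C[z]$. If $N\neq\{0\}$, then $N$ is principal of finite codimension in $\pp$, so $\pp_0/N$ is finite-dimensional, complete, and therefore coincides with $A$. Since $\pi\neq 0$ we have $\dim A\geq 1$, and since the first density statement already exhibits the image of $z$ as a supercyclic element of $A$, Theorem~\ref{prop} forces $\dim A\leq 1$, hence $\dim A=1$.

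The essential obstacle is ruling out $\dim A=1$; this is where the second density hypothesis must be invoked. In a one-dimensional Banach algebra one has $A=\C\cdot e$ with $e=[z]\neq 0$ and $e^2=\alpha e$ for some $\alpha\in\C$. A quick induction gives $[z^n]=\alpha^{n-1}e$, and the binomial expansion yields $[z(1+z)^m]=(1+\alpha)^m e$. Density of $\{z(1+z)^m\}_{m\in\N}$ in $A\cong\C$ would then force $\{(1+\alpha)^m:m\in\N\}$ to be dense in $\C$; but the powers of any single complex number converge to $0$, escape to infinity, or lie on one fixed circle, and are therefore never dense in $\C$. This contradiction forces $N=\{0\}$, so $\pi$ is a norm, and the embedding $\pp_0\hookrightarrow A$ ensures $A$ is infinite-dimensional, completing the proof.
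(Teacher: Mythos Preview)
Your argument is correct, but it takes a different route from the paper's. The paper works directly on the quotient $\pp_0/I$ with $I=\ker\pi$: from $\pi(z(1+z)^{m_k}-p_k)\to 0$ and (\ref{ppp1}) it observes that the operator $p+I\mapsto (1+z)p+I$ is hypercyclic on $\pp_0/I$ with hypercyclic vector $z+I$, and then invokes the single fact that no nonzero finite-dimensional normed space carries a hypercyclic operator. That immediately forces $\pp_0/I$ to be infinite dimensional, hence $I=\{0\}$.

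Your approach instead uses the \emph{supercyclicity} hypothesis first, appealing to Theorem~\ref{prop} to exclude $1<\dim A<\infty$; since supercyclic operators do exist on one-dimensional spaces, you are then obliged to dispose of the case $\dim A=1$ separately, which you do via the explicit computation $[z(1+z)^m]=(1+\alpha)^m e$ and the impossibility of $\{(1+\alpha)^m\}$ being dense in $\C$. Both arguments ultimately rest on the same elementary input from \cite{ww}, but the paper's choice to lead with the hypercyclicity condition is more economical precisely because hypercyclicity already fails in dimension one, eliminating the case split. Your route, on the other hand, makes the role of each of the two density hypotheses more transparent: supercyclicity handles $\dim>1$, almost hypercyclicity handles $\dim=1$.
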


\begin{proof} Let $I=\{q\in\pp_0:\pi(q)=0\}$. Since $\pi$ is
submultiplicative, $I$ is an ideal in $\pp_0$ and therefore in
$\pp$. Since $\pi$ is non-zero, $I\neq\pp_0$. Thus $\pp_0/I$ with
the norm $\|q+I\|=\pi(q)$ is a non-trivial complex algebra with a
submultiplicative norm. Since $\pi(z(1+z)^{m_k}-p_k)\to 0$,
(\ref{ppp1}) implies that the operator $p+I\mapsto (1+z)p+I$ on
$\pp_0/I$ is hypercyclic with the hypercyclic vector $z+I$. Since
there is no hypercyclic operator on a non-trivial finite dimensional
normed space \cite{ww}, $\pp_0/I$ is infinite dimensional and
therefore $I$ has infinite codimension in $\pp$. Since the only
ideal in $\pp$ of infinite codimension is $\{0\}$, $I=\{0\}$ and
therefore $\pi$ is a norm.

Thus the completion $A$ of $(\pp_0,\pi)$ is an infinite dimensional
Banach algebra. Conditions $\pi(c_kz^{n_k}-p_k)\to 0$ and
$\pi(z(1+z)^{m_k}-p_k)\to 0$ together with (\ref{ppp1}) imply that
$A$ is chaotic with $z$ as a chaotic element.
\end{proof}

It remains to construct a seminorm on $\pp_0$, which will allow us
to apply Lemma~\ref{z2}.

\subsection{Ideals in $\aa^{[k]}$ and submultiplicative norms on $\pp$}

For $k\in\N$, we consider the commutative Banach algebra $\aa^{[k]}$
of the power series
$$
a=\sum_{n\in\Z_+^k} a_nu_1^{n_1}\dots u_k^{n_k},\ \ \text{with}\ \
\ssub{\|a\|}{[k]}=\sum_{n\in\Z_+^k}|a_n|<\infty
$$
with the natural multiplication. We will treat the elements of
$\aa^{[k]}$ both as power series and as continuous functions
$u\mapsto a(u_1,\dots,u_k)$ on $\DD^k$, holomorphic on $\D^k$. Note
that as a Banach space $\aa^{[k]}$ is $\ell_1(\Z_+^k)$. In
particular, the underlying Banach space of $\aa^{[k]}$ can be
treated as the dual space of $c_0(\Z_+^k)$, which allows us to speak
about the $*$-weak topology on $\aa^{[k]}$.

For a non-empty open subset $U$ of $\C$ we also consider the complex
algebra $\hh_U$ of holomorphic functions $f:U\to \C$ endowed with
the Fr\'echet space topology of uniform convergence on compact
subsets of $U$. For $\gamma>0$, we write $\hh_\gamma$ instead of
$\hh_{\gamma\D}$.

If $\xi=(\xi_1,\dots,\xi_k)\in\pp_0^k$ and $a\in\aa^{[k]}$, we can
consider $a(\xi_1,\dots,\xi_k)$ as a power series
\begin{equation}\label{tra}
a(\xi_1,\dots,\xi_k)(z)=a(\xi_1(z),\dots,\xi_k(z))=\sum\limits_{m=1}^\infty
\alpha_m(a,\xi)z^m,
\end{equation}
which converges uniformly on the compact subsets of the disk
$\gamma(\xi)\D$, where
$$
\gamma(\xi)=\sup\{c>0:\xi_j(c\D)\subseteq\D\ \ \text{for}\ \ 1\leq
j\leq k\}>0.
$$
By the Hadamard formula,
$\slim\limits_{m\to\infty}|\alpha_m(a,\xi)|^{1/m}\leq
\frac1{\gamma(\xi)}$ for each $a\in\aa^{[k]}$. By the uniform
boundedness principle,
$\slim\limits_{m\to\infty}\|\alpha_m(\cdot,\xi)\|^{1/m}\leq
\frac1{\gamma(\xi)}$, where the norm is taken in
$\bigl(\aa^{[k]}\bigr)^*$. Hence the map
$$
\Phi_\xi:\aa^{[k]}\to \hh_{\gamma(\xi)},\quad
\Phi_\xi(a)=a(\xi_1,\dots,\xi_k)
$$
is a continuous algebra homomorphism from the Banach algebra
$\aa^{[k]}$ to the Fr\'echet algebra $\hh_{\gamma(\xi)}$ of
holomorphic complex valued functions on the disk $\gamma(\xi)\D$.

\begin{remark}\label{rema} Note that if $U$ is a connected non-empty open
subset of $\C$ and all zeros of a polynomial $p\in\pp$ of degree
$n\in\N$ are in $U$, then the ideal $J_p$, generated by $p$ in the
algebra $\hh_U$ is closed and has codimension $n$. It consists of
all $f\in\hh_U$ such that every zero of $p$ of order $k\in\N$ is
also a zero of $f$ of order $\geq k$. We write $p\big|f$ to denote
the inclusion $f\in J_p$. Note that
$\hh_U=J_p\oplus\spann\{1,z,\dots,z^{n-1}\}$.
\end{remark}

We use the following notation. If $\xi\in\pp_0^k$ and $q\in\pp$ has
all its zeros in the disk $\gamma(\xi)\D$, then
\begin{equation}\label{ietq}
I_{\xi,q}=\{a\in\aa^{[k]}:q\big|\Phi_\xi(a)\}
\end{equation}
with $\Phi_\xi(a)$ considered as an element of $\hh_{\gamma(\xi)}$.
In the case $q=z^n$ with $n\in\N$, we have
\begin{equation}\label{ietn}
I_{\xi,z^n}=\{a\in\aa^{[k]}:\alpha_j(a,\xi)=0\ \ \text{for}\ \ 0\leq
j<n\},
\end{equation}
where $\alpha_j(a,\xi)$ are defined in (\ref{tra}). Finally,
\begin{equation}\label{iu}
I_{\xi}=\ker\Phi_\xi=\bigcap_{n=1}^\infty I_{\xi,z^n}.
\end{equation}

The proof of the following lemma is lengthy and technical. We
postpone it until the next section.

\begin{lemma}\label{wstarH} Let $\xi=(\xi_1,\dots,\xi_k)\in\pp_0^k$
be such that $\xi_1=z$. Then $I_\xi$ is a closed ideal in
$\aa^{[k]}$ and for each $q\in\pp$, whose zeros are in the disk
$\gamma(\xi)\D$, $I_{\xi,q}$ is closed ideal in $\aa^{[k]}$ of
codimension $\deg q$. Moreover, $I_\xi\subset I_{\xi,q}$ and
\begin{equation}\label{limi1}
\ssub{\|a+I_{\xi,z^n}\|}{A^{[k]}/I_{\xi,z^n}}\to
\ssub{\|a+I_{\xi}\|}{A^{[k]}/I_{\xi}}\ \ \ \text{as $n\to\infty$ for
each $a\in\aa^{[k]}$}.
\end{equation}
Furthermore, if $q_n\in\pp$ for $n\in\N\cup\{\infty\}$ are
polynomials of degree $m\in\N$, whose zeros are in $\gamma(\xi)\D$
and the sequence $\{q_n\}_{n\in\N}$ converges to $q_\infty$ as $n\to
\infty$ $($in the usual sense in the finite dimensional space of
polynomials of degree $\leq m)$, then
\begin{equation}\label{limi2}
\ssub{\|a+I_{\xi,q_n}\|}{A^{[k]}/I_{\xi,q_n}}\to
\ssub{\|a+I_{\xi,q_\infty}\|}{A^{[k]}/I_{\xi,q_\infty}}\ \ \
\text{as $n\to\infty$ for each $a\in\aa^{[k]}$}.
\end{equation}
\end{lemma}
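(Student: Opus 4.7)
The plan is to use $\Phi_\xi$ together with the $*$-weak topology on $\aa^{[k]}=\ell_1(\Z_+^k)$ coming from its predual $c_0(\Z_+^k)$. Since $\Phi_\xi$ is a continuous algebra homomorphism and $J_q$ is a closed ideal in $\hh_{\gamma(\xi)}$ by Remark~\ref{rema}, both $I_\xi=\ker\Phi_\xi$ and $I_{\xi,q}=\Phi_\xi^{-1}(J_q)$ are closed ideals with $I_\xi\subseteq I_{\xi,q}$. For the codimension I would exploit $\xi_1=z$: then $\Phi_\xi(u_1^j)=z^j$, so writing $\Phi_\xi(a)=qh+r$ with $r=\sum_{j=0}^{m-1}c_jz^j$ of degree $<m:=\deg q$ shows $a-\sum_j c_j u_1^j\in I_{\xi,q}$, and the classes of $u_1^0,\dots,u_1^{m-1}$ are independent modulo $I_{\xi,q}$ because their $\Phi_\xi$-images are independent modulo $J_q$. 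Hence $I_{\xi,q}$ has codimension exactly $m$.

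For (\ref{limi1}) the crucial observation is that every coefficient functional $\alpha_j(\cdot,\xi)$ is $*$-weakly continuous: $\xi_i(0)=0$ forces the monomial $\xi_1^{n_1}\cdots\xi_k^{n_k}$ to have order $\geq|n|$ at $0$, so $\alpha_j(\cdot,\xi)$ is a finite linear combination of coordinate functionals (over $|n|\leq j$), each lying in the predual. From $I_{\xi,z^{n+1}}\subseteq I_{\xi,z^n}$ the sequence in (\ref{limi1}) is non-decreasing and bounded above by $\|a+I_\xi\|$. For the reverse bound I would choose $b_n\in I_{\xi,z^n}$ with $\|a-b_n\|\leq\|a+I_{\xi,z^n}\|+1/n$; since $\{b_n\}$ is bounded in $\ell_1$, Banach--Alaoglu yields a $*$-weak subsequential limit $b^*$, and $*$-weak continuity of every $\alpha_j(\cdot,\xi)$ forces $b^*\in I_\xi$. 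The $*$-weak lower semicontinuity of the norm then gives $\|a+I_\xi\|\leq\|a-b^*\|\leq\lim_n\|a+I_{\xi,z^n}\|$.

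For (\ref{limi2}) I would use duality: $\|a+I_{\xi,q}\|=\sup\{|f(a)|:f\in I_{\xi,q}^\perp,\ \|f\|\leq 1\}$. The codimension argument above shows that $\Phi_\xi$ induces an isomorphism $\aa^{[k]}/I_{\xi,q}\cong\hh_{\gamma(\xi)}/J_q$, so $\Phi_\xi^*$ identifies $J_q^\perp$ with $I_{\xi,q}^\perp$. To get a basis that depends continuously on $q$, I use the Hermite divided differences $f\mapsto[\lambda_1,\dots,\lambda_j]f$ ($j=1,\dots,m$) at an ordered list $\lambda_1,\dots,\lambda_m$ of the roots of $q$ (with multiplicity); these span $J_q^\perp$, and pulling back gives a basis $T_j(q):a\mapsto[\lambda_1,\dots,\lambda_j]\Phi_\xi(a)$ of $I_{\xi,q}^\perp$. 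Its $\ell_\infty$-coefficients are the divided differences of $\xi(\cdot)^n$, which are bounded by $r^{|n|}$ for some $r<1$ uniform over a compact set of admissible roots (via the Cauchy integral representation on a contour inside $\gamma(\xi)\D$), so in fact $T_j(q)\in c_0(\Z_+^k)$. For $q_n\to q_\infty$ I would order the roots so $\lambda_i^{(n)}\to\lambda_i^{(\infty)}$; divided differences are jointly continuous in the nodes, so $T_j(q_n)\to T_j(q_\infty)$ in the predual norm. Convergence of a basis of the $m$-dimensional subspaces $I_{\xi,q_n}^\perp$ to a basis of $I_{\xi,q_\infty}^\perp$ forces convergence of the corresponding dual unit balls in Hausdorff distance, from which (\ref{limi2}) follows. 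The main obstacle is exactly this treatment of root collisions: when $q_\infty$ has a root of higher multiplicity than nearby $q_n$, any basis of plain evaluation functionals degenerates, and only the confluent (Hermite) divided-difference formalism provides a basis of $I_{\xi,q}^\perp$ that stays linearly independent and varies continuously through the collision.
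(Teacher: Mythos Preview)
Your proposal is correct. The treatment of closedness, codimension, the inclusion $I_\xi\subset I_{\xi,q}$, and (\ref{limi1}) via the $*$-weak topology on $\aa^{[k]}=\ell_1(\Z_+^k)=(c_0(\Z_+^k))^*$ is essentially what the paper does; the paper packages the Banach--Alaoglu step into a separate lemma about decreasing sequences of $*$-weak closed subspaces, but the content is identical.

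The genuine difference is in (\ref{limi2}). The paper constructs, for each $\lambda\in(\gamma\D)^m$, functionals $\phi_{\lambda,j}\in\bigl(\aa^{[k]}\bigr)^*$ whose coefficients are the symmetric polynomials $P_{m,j,r}(\lambda)=\det M_{m,j,r}(\lambda)/\det M_m(\lambda)$ (ratios of Vandermonde-type determinants). These satisfy the \emph{exact} biorthogonality $\phi_{\lambda,j}(u_1^{r-1})=\delta_{j,r}$, and the required growth bound $|P_{m,j,r}(\lambda)|\leq c\beta^r$ is obtained by a somewhat laborious inductive recursion on $m$; continuity of $\lambda\mapsto\phi_{\lambda,j}$ then feeds into a separate lemma (Lemma~\ref{quo}) that turns norm-convergence of biorthogonal systems of functionals into convergence of quotient norms. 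Your route via Hermite divided differences produces a different basis of $I_{\xi,q}^\perp$, merely upper-triangular rather than biorthogonal with respect to $\{u_1^{r-1}\}$, and replaces the determinant combinatorics by the Cauchy integral representation
\[
[\lambda_1,\dots,\lambda_j]\,\Phi_\xi(a)=\frac{1}{2\pi i}\oint_C \frac{\Phi_\xi(a)(w)}{(w-\lambda_1)\cdots(w-\lambda_j)}\,dw,
\]
from which both the geometric decay of the $\ell_\infty$-coefficients and the norm-continuous dependence on $\lambda$ are immediate. The paper's choice buys full symmetry in $\lambda$ (no need to order the roots so that $\lambda_i^{(n)}\to\lambda_i^{(\infty)}$) and plugs directly into its biorthogonality lemma; yours is shorter, sidesteps the $P_{m,j,r}$ recursion entirely, and the Hausdorff-distance argument on the unit balls of the $m$-dimensional annihilators is a clean substitute for Lemma~\ref{quo}. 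Both approaches resolve the root-collision issue in the same spirit: by choosing a basis of $I_{\xi,q}^\perp$ that is \emph{polynomial} (hence continuous) in the roots rather than the na\"ive basis of point evaluations.
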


If $\xi\in\pp_0^k$ and $\xi_1=z$, then
$\pp\subseteq\Phi_\xi(\aa^{[k]})$. Indeed, $\Phi_\xi(a)=p$ if
$p\in\pp$ and $a(u_1,\dots,u_k)=p(u_1)$. Hence we can use the above
ideals to define seminorms on $\pp$. Since $I_\xi=\ker\Phi_\xi$ and
$\Phi_\xi(\aa^{[k]})\supseteq \pp$, we can define
\begin{equation}\label{piu}
\pi_\xi:\pp\to \R_+, \quad
\pi_\xi(p)=\ssub{\|\Phi_\xi^{-1}(p)\|}{\aa^{[k]}/I_{\xi}}=
\inf\{\ssub{\|a\|}{[k]}:a\in\aa^{[k]},\ \Phi_\xi(a)=p\}.
\end{equation}
By Lemma~\ref{wstarH}, $I_{\xi}$ is a closed ideal in $\aa^{[k]}$
and therefore $\pi_\xi$ is a submultiplicative norm on $\pp$.

If additionally $q\in\pp$ has all its zeros in the disk
$\gamma(\xi)\D$, then using the closeness of the ideal $I_{\xi,q}$
in $\aa^{[k]}$ and the inclusion $I_\xi\subset I_{\xi,q}$,  we can
define
\begin{equation}\label{piuq}
\pi_{\xi,q}:\pp\to \R_+,\quad
\pi_{\xi,q}(p)=\ssub{\|\Phi_\xi^{-1}(p)+I_{\xi,q}\|}{\aa^{[k]}/I_{\xi,q}}=
\inf\{\ssub{\|a\|}{[k]}:a\in\aa^{[k]},\ q\big|(p-\Phi_\xi(a))\}.
\end{equation}
The function $\pi_{\xi,q}$ is a submultiplicative seminorm on $\pp$.

\begin{lemma}\label{kaka}Let $k\in\N$,
$\xi'=(\xi_1,\dots,\xi_{k+1})\in\pp_0^{k+1}$ with $\xi_1=z$ and
$\xi=(\xi_1,\dots,\xi_k)$. Then $\pi_{\xi'}(p)\leq \pi_\xi(p)$ for
all $p\in\pp$. Moreover, if $U$ is a connected open subset of
$\gamma(\xi)\D$, $0\in U$, $\xi_{k+1}(U)\subseteq\D$ and
$q\in\pp\setminus\{0\}$ is a divisor of $\xi_{k+1}$ and has all its
zeros in $U$, then $\pi_{\xi,q}(p)\leq \pi_{\xi'}(p)$ for every
$p\in\pp$.
\end{lemma}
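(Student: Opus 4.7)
The plan for both inequalities is to transform a representative used to compute the norm on the right-hand side into a representative of no greater weight for the norm on the left-hand side. For the first inequality, I would use the isometric algebra homomorphism $\iota\colon\aa^{[k]}\to\aa^{[k+1]}$ that regards $a(u_1,\dots,u_k)$ as a series in $u_1,\dots,u_{k+1}$ trivial in the last variable. Because the $\ell_1$-norm only sees indices with $n_{k+1}=0$, one has $\|\iota(a)\|_{[k+1]}=\|a\|_{[k]}$; and since the first $k$ components of $\xi'$ are exactly those of $\xi$, the identity $\Phi_{\xi'}\circ\iota=\Phi_\xi$ holds. Hence any $a$ realizing $\pi_\xi(p)$ produces $\iota(a)$ of the same norm serving as a candidate for $\pi_{\xi'}(p)$, so $\pi_{\xi'}(p)\le\pi_\xi(p)$.

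For the second inequality, fix $a'\in\aa^{[k+1]}$ with $\Phi_{\xi'}(a')=p$ and expand it in powers of the last variable as $a'=\sum_{j\ge 0}b_j\,u_{k+1}^j$ with $b_j\in\aa^{[k]}$; the $\ell_1$ structure gives $\sum_{j\ge 0}\|b_j\|_{[k]}=\|a'\|_{[k+1]}$. The natural candidate representative modulo $q$ is $a:=b_0$; if one can show $q\,\big|\,(p-\Phi_\xi(b_0))$ in $\hh_{\gamma(\xi)}$, then $\pi_{\xi,q}(p)\le\|b_0\|_{[k]}\le\|a'\|_{[k+1]}$, and taking the infimum over $a'$ completes the proof.

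For the divisibility, I would first work inside $U$. Since $U\subseteq\gamma(\xi)\D$, one has $|\Phi_\xi(b_j)(z)|\le\|b_j\|_{[k]}$ on $U$, and $|\xi_{k+1}(z)|<1$ on $U$ by hypothesis, so the series $\sum_{j\ge 0}\Phi_\xi(b_j)\,\xi_{k+1}^j$ converges absolutely and uniformly on $U$ to $p|_U$ (by analytic continuation from $\Phi_{\xi'}(a')=p$). Subtracting the $j=0$ term and factoring out $\xi_{k+1}$ yields
$$
p-\Phi_\xi(b_0)=\xi_{k+1}\cdot\sum_{j\ge 1}\Phi_\xi(b_j)\,\xi_{k+1}^{j-1}\quad\text{in }\hh_U,
$$
which is divisible in $\hh_U$ by $\xi_{k+1}$, and hence by $q$ since $q$ divides $\xi_{k+1}$ in $\pp$.

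The main obstacle, and what the extra hypotheses on $U$ are designed to resolve, is the promotion of this divisibility from $\hh_U$ to the larger algebra $\hh_{\gamma(\xi)}$ where the ideal $I_{\xi,q}$ actually lives. By Remark~\ref{rema}, divisibility by $q$ is the purely local condition that each zero of $q$ of order $m$ be a zero of the given function of order at least $m$; because all zeros of $q$ lie in $U$ and the restriction of $p-\Phi_\xi(b_0)\in\hh_{\gamma(\xi)}$ to $U$ coincides with the function already shown to be $q$-divisible there, the local orders are identical in both algebras, and $q$-divisibility is inherited.
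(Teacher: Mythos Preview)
Your proof is correct and follows essentially the same route as the paper's: the first inequality via the isometric embedding $\aa^{[k]}\hookrightarrow\aa^{[k+1]}$, and the second by expanding $a'\in\aa^{[k+1]}$ in powers of $u_{k+1}$, taking $b_0$ as the candidate, using the identity theorem on the connected open set $U\ni 0$ to match the series with $p$, and then promoting $q$-divisibility from $\hh_U$ to $\hh_{\gamma(\xi)}$ via the local zero-order criterion of Remark~\ref{rema}. Your pointwise bound $|\Phi_\xi(b_j)(z)|\le\|b_j\|_{[k]}$ is in fact a slightly cleaner justification of convergence on $U$ than the paper's detour through continuity of $\Phi_\xi$ into the Fr\'echet space $\hh_{\gamma(\xi)}$.
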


\begin{proof} For any $p\in\pp$ and $a\in\aa^{[k]}$ satisfying
$\Phi_\xi(a)=p$, we have $\Phi_{\xi'}(b)=p$ and
$\ssub{\|a\|}{[k]}=\ssub{\|b\|}{[k+1]}$ with
$b(u_1,\dots,u_{k+1})=a(u_1,\dots,u_k)$. By (\ref{piu}),
$\pi_{\xi'}(p)\leq \pi_\xi(p)$ for each $p\in\pp$. Now assume that
$U$ is a connected open subset of $\gamma(\xi)\D$, $0\in U$,
$\xi_{k+1}(U)\subseteq\D$ and $q\in\pp\setminus\{0\}$ is a divisor
of $\xi_{k+1}$ and has all its zeros in $U$. Let $p\in\pp$ and
$a\in\aa^{[k+1]}$ be such that $\Phi_{\xi'}(a)=p$. By definition of
$\aa^{[k+1]}$,
\begin{equation}\label{ab}
a=b_0+\sum\limits_{n=1}^\infty b_n u_{k+1}^n,\ \ \text{where
$b_j\in\aa^{[k]}$ and}\ \
\ssub{\|a\|}{[k+1]}=\sum\limits_{j=0}^\infty \ssub{\|b_j\|}{[k]}.
\end{equation}
By the definitions of $\Phi_\xi$ and $\Phi_{\xi'}$, we get
\begin{equation}\label{PHI}
p=\Phi_{\xi'}(a)=\sum_{n=0}^\infty \Phi_\xi(b_n)\xi_{k+1}^n\ \ \
\text{in $\hh_{\gamma(\xi')}$}.
\end{equation}
By (\ref{ab}), the series $\sum b_n$ converges absolutely in the
Banach space $\aa^{[k]}$. Since $\Phi_\xi:\aa^{[k]}\to\hh_\gamma$ is
a continuous linear operator, the series $\sum \Phi_{\xi}(b_n)$
converges absolutely in the Fr\'echet space $\hh_{\gamma(\xi)}$ and
therefore in the Fr\'echet space $\hh_U$. Since
$\xi_{k+1}(U)\subseteq\D$, the series in (\ref{PHI}) converges in
$\hh_U$. Since $U$ is open, connected and contains $0$, the sum of
the series in (\ref{PHI}) and $p$ coincide as functions on $U$ by
the uniqueness theorem: they are both holomorphic on $U$ and have
the same Taylor series at $0$. Since $q\big|\xi_{k+1}$, (\ref{PHI})
implies that $q\big|(p-\Phi_\xi(b_0))$ in $\hh_U$. Since all zeros
of $q$ are in $U$, $q\big|(p-\Phi_\xi(b_0))$ in $\hh_{\gamma(\xi)}$.
By (\ref{piuq}) and (\ref{ab}), $\pi_{\xi,q}(p)\leq
\ssub{\|b_0\|}{[k]}\leq \ssub{\|a\|}{[k+1]}$. Since $a$ is an
arbitrary element of $\aa^{[k+1]}$ satisfying $\Phi_{\xi'}(a)=p$,
(\ref{piu}) implies that $\pi_{\xi,q}(p)\leq \pi_{\xi'}(p)$.
\end{proof}

\begin{lemma}\label{techn} Let $q\in\pp_0$, $n\in\N$ and $k>0$ be
such that $\deg q<n$. For every $c>0$, let $\delta(c)=(2kc)^{-1/n}$
and $q_c=k(cz^n-q)\in\pp_0$. Then for every sufficiently large
$c>0$, $q_c(\delta(c)\D)\subseteq \D$ and all zeros of $q_c$ belong
to $\delta(c)\D$.
\end{lemma}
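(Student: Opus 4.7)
The plan is to exploit the normalisation $kc\delta(c)^n=1/2$ together with the fact that $\deg q<n$ forces $q(z)$ to be dominated by $cz^n$ on the tiny disk $\delta(c)\D$ once $c$ is large. I will verify the two assertions separately: a direct triangle-inequality estimate for the mapping property, and Rouch\'{e}'s theorem for localising the zeros.

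For the inclusion $q_c(\delta(c)\D)\subseteq\D$, take $z$ with $|z|\leq\delta(c)$. The very definition of $\delta(c)$ gives $|kcz^n|\leq kc\delta(c)^n=1/2$. Writing $q(z)=\sum_{j=1}^{n-1}a_jz^j$ (valid because $q(0)=0$ and $\deg q<n$) and using that $\delta(c)\leq 1$ for large $c$, I get
\[
|kq(z)|\leq k\sum_{j=1}^{n-1}|a_j|\delta(c)^j\leq k\delta(c)\sum_{j=1}^{n-1}|a_j|,
\]
which tends to $0$ as $c\to\infty$. For all sufficiently large $c$ this upper bound is at most $1/4$, so $|q_c(z)|\leq 1/2+1/4<1$, giving the first assertion.

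For the location of the zeros, apply Rouch\'{e}'s theorem on the circle $|z|=\delta(c)$ with $f=kcz^n$ and $g=-kq$. On that circle $|f|=1/2$, while the bound just established yields $|g|\leq 1/4<|f|$ strictly for $c$ large. Hence $q_c=f+g$ and $f$ have the same number of zeros, counted with multiplicity, inside the open disk $\delta(c)\D$; this number is $n$. Since $\deg q<n$, the leading coefficient of $q_c$ is $kc\neq 0$, so $q_c$ has degree exactly $n$ and all $n$ of its zeros are thereby accounted for inside $\delta(c)\D$.

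No step poses genuine difficulty; the only mild technical point is maintaining strict inequalities, so that the \emph{open} disk $\delta(c)\D$ (rather than just its closure) receives the zeros and is mapped into $\D$. This is ensured by the headroom between $1/2$ and $1/4$ in the estimates above, which in turn requires only taking $c$ larger than an explicit threshold depending on $k$, $n$, and the coefficients of $q$.
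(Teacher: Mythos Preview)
Your proof is correct and follows essentially the same approach as the paper's: both use the normalisation $kc\delta(c)^n=\tfrac12$ to control the leading term, bound $|kq(z)|$ by a quantity of order $\delta(c)$ (the paper via a Lipschitz constant $|q(z)|\leq\alpha|z|$, you via the coefficient sum), and then apply Rouch\'e on the circle $|z|=\delta(c)$. The only differences are cosmetic bookkeeping---your threshold $\tfrac14$ versus the paper's $\tfrac12$, and your explicit treatment of the open-versus-closed disk issue, which the paper handles implicitly through strict inequalities.
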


\begin{proof} Obviously, $\lim\limits_{c\to\infty}\delta(c)=0$.
Since $q(0)=0$, there is $\alpha>0$ such that $|q(z)|\leq \alpha|z|$
for all $z\in\D$. Clearly, it suffices to show that
$q_c(\delta(c)\D)\subseteq \D$ and all zeros of $q_c$ belong to
$\delta(c)\D$ whenever $\delta(c)<\min\{1,\frac1{2k\alpha}\}$.

Let $c>0$ be such that $\delta(c)<\min\{1,\frac1{2k\alpha}\}$. If
$z\in\delta(c)\D$, then
$|kcz^n|<kc\delta(c)^n=\frac{kc}{2kc}=\frac12$ and $|kq(z)|\leq
k\alpha\delta(c)<\frac{k\alpha}{2k\alpha}=\frac12$. Hence
$|q_c(z)|\leq |kcz^n|+|kq(z)|<\frac12+\frac12=1$. Thus
$q_c(\delta(c)\D)\subseteq \D$.

Now if $|z|=\delta(c)$, then
$|kcz^n|=kc\delta(c)^n=\frac{kc}{2kc}=\frac12$, but $|kq(z)|\leq
k\alpha\delta(c)<\frac{k\alpha}{2k\alpha}=\frac12$. By the Rouch\'e
theorem \cite {rou}, $q_c=kcz^n-kq$ has the same number of zeros
(counting with multiplicity) in $\delta(c)\D$ as $kcz^n$. The latter
has $n=\deg q_c$ zeros in $\delta(c)\D$. Hence all the zeros of
$q_c$ are in $\delta(c)\D$.
\end{proof}

The proof of the next lemma is postponed until further sections.

\begin{lemma}\label{techn-terr} Let $k,\delta>0$, $p\in\pp\setminus\{0\}$
and $m\in\N$. Then for every sufficiently large $n\in\N$, there
exists a connected open set $W_n\subset \C$ such that $0\in
W_n\subseteq\delta\D$ and the polynomial $q_n=kz((1+z)^n-p)$ has at
least $m$ zeros $($counting with multiplicity$)$ in $W_n$ and
satisfies $q_n(W_n)\subseteq\D$.
\end{lemma}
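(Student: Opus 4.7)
The proof splits into two cases depending on whether $p(0)\neq 0$ or $p(0)=0$. The easy case is $p(0)\neq 0$: rescale $z=w/n$ and define $g_n(w)=(n/k)q_n(w/n)=w((1+w/n)^n-p(w/n))$. Then $g_n\to F(w):=w(e^w-p(0))$ uniformly on compact subsets of $\C$, and $F$ has infinitely many zeros (at $0$ and at $\log p(0)+2\pi i\ell$, $\ell\in\Z$). Pick $R>0$ such that $F$ has at least $m$ zeros in $|w|<R$ and none on $|w|=R$; Hurwitz's theorem then gives, for all large $n$, at least $m$ zeros of $g_n$ in $|w|<R$, hence of $q_n$ in $|z|<R/n$. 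Setting $W_n=\{z\in\C:|z|<R/n\}$, which is connected, open, contains $0$, and lies in $\delta\D$ eventually, and noting that on $W_n$ one has $|q_n(z)|\leq k(R/n)(e^R+\sup_{|w|\leq R/n}|p(w)|)\to 0$, this $W_n$ does the job.

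For $p(0)=0$ the analogous scaling fails, since then $F(w)=we^w$ has only a single zero. Instead, pick $z_0\in\C$ with $|1+z_0|=1$, $0<|z_0|<\delta/10$, $p(z_0)\neq 0$, and $|z_0|$ small enough that $2k|z_0|(e+\sup_{|w|\leq\delta}|p(w)|)<1$; such $z_0$ exists because $0$ is a smooth point of the circle $|1+z|=1$ and $p$ has only finitely many zeros. Substituting $z=z_0+(1+z_0)w/n$ and setting $\alpha_n=(1+z_0)^n\in\T$, one has $q_n(z)=kz\,\alpha_n G_n(w)$, where $G_n(w)=(1+w/n)^n-\alpha_n^{-1}p(z_0+(1+z_0)w/n)$ differs from $e^w-c_n$ by $O(1/n)$ uniformly on compacts, with $c_n=\alpha_n^{-1}p(z_0)$ of fixed modulus $|p(z_0)|$. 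The zeros of $e^w=c_n$ lie on the vertical line $\mathrm{Re}\,w=\log|p(z_0)|$, spaced by $2\pi i$. On a rectangle of horizontal width exceeding $2|\log|p(z_0)||$ and vertical extent exceeding $2\pi m$, with horizontal edges placed (depending on $n$ via $\arg c_n$) midway between two consecutive such zeros, we obtain a uniform-in-$n$ positive lower bound for $|e^w-c_n|$ on the boundary and at least $m$ enclosed zeros. Rouch\'e's theorem then produces the same count for $G_n$ for large $n$, giving at least $m$ zeros of $q_n$ in a disk of radius $O(1/n)$ around $z_0$.

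Finally, take $W_n$ to be the union of that $O(1/n)$-disk around $z_0$ with a tubular neighborhood of width $1/n$ of the arc $\{e^{it}-1:t\in[0,t_0]\}$ connecting $0$ to $z_0=e^{it_0}-1$. Then $W_n$ is open, connected, contains $0$, and lies in $\delta\D$ for large $n$. On $W_n$ one has $|1+z|\in(1-1/n,1+1/n)$, hence $|(1+z)^n|<e$ for large $n$, and $|z|\leq|z_0|+1/n\leq 2|z_0|$; combining, $|q_n(z)|\leq 2k|z_0|(e+\sup_{|w|\leq\delta}|p(w)|)<1$ by the choice of $|z_0|$.

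The main obstacle is the Rouch\'e step in the $p(0)=0$ case: the limit function $e^w-c_n$ depends on $n$ through $\arg c_n$, and a fixed circular contour $|w|=R$ fails because for every $R>|\log|p(z_0)||$ some $\arg c_n$ places a zero of $e^w-c_n$ on $|w|=R$. The adaptive rectangular contour circumvents this by exploiting the one-dimensional lattice structure of zeros of $e^w=c$ (all on a single vertical line, spaced $2\pi i$): its horizontal edges can always be chosen between consecutive lattice points regardless of $c$, yielding a uniform lower bound on $|e^w-c_n|$ on $\partial R_n$ and the desired zero count via Rouch\'e.
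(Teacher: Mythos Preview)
Your approach differs substantially from the paper's. The paper makes no case distinction on $p(0)$: it builds a single lens-shaped domain $W_n$ bounded by an arc of the circle $|1+z|=(2c)^{1/n}$, an arc of the circle $|z|=\sin\alpha$, and two segments of rays emanating from $-1$, and then counts zeros of $(1+z)^n-p$ in $W_n$ via the argument principle, estimating the variation of argument on each of the four boundary pieces separately (the straight segments require a small auxiliary lemma). Your rescaling $z=w/n$ together with Hurwitz's theorem for the case $p(0)\neq 0$ is correct and considerably cleaner than this.

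In Case~2, however, your final estimate fails as written. Your Rouch\'e rectangle $R_n$ must reach the vertical line $\mathrm{Re}\,w=\log|p(z_0)|$, so the ``$O(1/n)$-disk around $z_0$'' covering its image has radius at least of order $|\log|p(z_0)||/n$, not $1/n$. On that disk the claim $|1+z|<1+1/n$ is false, and one only gets $|(1+z)^n|$ of order $1/|p(z_0)|$. Since $p(0)=0$, this makes $|z_0|\cdot\sup_{W_n}|(1+z)^n|$ of order $|z_0|/|p(z_0)|$, which blows up as $z_0\to 0$ whenever $p$ vanishes to order $\geq 2$ at the origin; for small $\delta$ there is then no admissible $z_0$. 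The repair is to replace the disk by the image of the rectangle itself, taken with its right edge at, say, $\mathrm{Re}\,w=1$ (and containing $w=0$, so that $z_0$ lies in the image and the union with the tube is connected). On that image one has $|(1+z)^n|=|1+w/n|^n\leq e^{\mathrm{Re}\,w+O(|w|^2/n)}\leq e^{2}$ for all large $n$, uniformly in how negative the left edge is, and the bound $|q_n(z)|\leq 2k|z_0|\bigl(e^2+\sup_{\delta\D}|p|\bigr)<1$ then goes through once the constant in your choice of $|z_0|$ is adjusted accordingly.
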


\begin{corollary}\label{techn-terr1} Let $k>0$, $p\in\pp\setminus\{0\}$
and $m\in\N$. Then there is $n_0\in\N$ and sequences $\{W_n\}_{n\geq
n_0}$ of connected non-empty open subsets of $\C$ containing $0$ and
$\{r_n\}_{n\geq n_0}$ of degree $m$ polynomials such that $r_n\to
z^m$, $\lim\limits_{n\to\infty}\sup\limits_{z\in W_n}|z|=0$, each
$r_n$ is a divisor of $q_n=kz((1+z)^n-p)$, $q_n(W_n)\subseteq\D$ and
all zeros of $r_n$ are in $W_n$ for each $n\geq n_0$.
\end{corollary}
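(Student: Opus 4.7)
The plan is to derive the corollary from Lemma~\ref{techn-terr} by applying it with a shrinking sequence of $\delta$'s, and then extracting $r_n$ as the polynomial whose roots are $m$ of the zeros of $q_n$ lying inside $W_n$.

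First I would choose a null sequence $\delta_n\to 0$ (for instance $\delta_n=1/n$). By Lemma~\ref{techn-terr} applied with this $\delta_n$, for each $n$ large enough (past some $n_0$) there is a connected open $W_n\subseteq \delta_n\D$ with $0\in W_n$ such that $q_n(W_n)\subseteq\D$ and $q_n$ has at least $m$ zeros in $W_n$ counted with multiplicity. Labeling $m$ of these zeros as $\lambda_{n,1},\dots,\lambda_{n,m}\in W_n$ (with multiplicity), I would define
\[
r_n(z)=\prod_{j=1}^m (z-\lambda_{n,j}).
\]
Since $r_n$ is the product of $m$ root factors of the polynomial $q_n$ respecting their multiplicities, $r_n$ divides $q_n$ in $\C[z]$. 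By construction $r_n$ has degree $m$ and all its zeros are in $W_n$.

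It remains to verify the two convergence statements. Clearly $\sup_{z\in W_n}|z|\leq \delta_n\to 0$ since $W_n\subseteq\delta_n\D$. For the same reason $|\lambda_{n,j}|<\delta_n\to 0$ for each $j$, so every elementary symmetric polynomial in $\lambda_{n,1},\dots,\lambda_{n,m}$ tends to $0$; since these are (up to sign) the non-leading coefficients of $r_n$, we get $r_n\to z^m$ coefficient-wise, i.e.\ in the finite-dimensional space of polynomials of degree $\leq m$.

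There is no real obstacle beyond routine bookkeeping: the only thing to be careful about is that the ``at least $m$ zeros in $W_n$'' conclusion of Lemma~\ref{techn-terr} is with multiplicity, which is exactly what is needed for the monic polynomial built from these roots to divide $q_n$. Everything else follows mechanically.
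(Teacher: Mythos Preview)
Your overall strategy is the same as the paper's, and the construction of $r_n$ from $m$ zeros of $q_n$ in $W_n$ (together with the symmetric-function argument for $r_n\to z^m$) is fine. However, there is a genuine gap in how you invoke Lemma~\ref{techn-terr}. That lemma is stated for a \emph{fixed} $\delta>0$: it produces a threshold $N=N(\delta)$ such that the conclusion holds for all $n\geq N(\delta)$. When you set $\delta_n=1/n$ and then claim the conclusion ``for each $n$ large enough,'' you are implicitly asserting $n\geq N(1/n)$ for all large $n$. Nothing in the lemma guarantees this; in fact, inspecting its proof one sees that $N(\delta)$ grows as $\delta\to 0$ (the zero count satisfies $\nu(n)>2n\alpha-\text{const}$ with $\alpha<\delta$, so smaller $\delta$ forces larger $n$). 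So the single-shot application with $\delta_n=1/n$ does not work as written.

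The fix is the standard diagonal argument, which is exactly what the paper does: apply Lemma~\ref{techn-terr} with $\delta=2^{-k}$ for each $k\in\Z_+$ to obtain thresholds $n_0<n_1<n_2<\cdots$, and for $n$ with $n_k\leq n<n_{k+1}$ use the set $W_n=W_{k,n}$ coming from the application with $\delta=2^{-k}$. Then $W_n\subseteq 2^{-k}\D$ on that block, so $\sup_{z\in W_n}|z|\to 0$ and the zeros of $r_n$ tend to $0$, giving $r_n\to z^m$ exactly as you argued. Once you insert this diagonalisation, your proof is complete and matches the paper's.
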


\begin{proof} Applying Lemma~\ref{techn-terr} with $\delta=2^{-k}$
for $k\in\Z_+$, we find a strictly increasing sequence
$\{n_k\}_{k\in\Z_+}$ of positive integers such that for every
$k\in\Z_+$ and every $n\geq n_k$, there is a connected open subset
$W_{k,n}$ of $\C$ for which
\begin{equation}\label{wkn}
\begin{array}{l}
\text{$0\in W_{k,n}\subseteq 2^{-k}\D$, $q_n(W_{k,n})\subseteq\D$
and}\\ \text{$q_n$ has at least $m$ zeros in $W_{k,n}$ for every
$k\in\Z_+$ and $n\geq n_k$}.\end{array}
\end{equation}
The latter means that we can pick
$\lambda_{k,n,1},\dots,\lambda_{k,n,m}\in W_{k,n}$ such that
$r_{k,n}=\prod\limits_{j=1}^m (z-\lambda_{k,n,j})$ is a divisor of
$q_n$. Now for every $n\geq n_0$, we define $r_n=r_{k,n}$ and
$W_n=W_{k,n}$ whenever $n_k\leq n<n_{k+1}$. According to
(\ref{wkn}), each $r_n$ is a divisor of $q_n$, each $r_n$ has all
its zeros in $W_n$, $q_n(W_n)\subseteq \D$ and $W_n\subseteq
2^{-k}\D$ provided $n_k\leq n<n_{k+1}$. The latter means that
$\lim\limits_{n\to\infty}\sup\limits_{z\in W_n}|z|=0$ and also that
$r_n\to z^m$.
\end{proof}

\subsection{Proof of Theorem~\ref{main} modulo
Lemmas~\ref{wstarH} and~\ref{techn-terr}}

Now we take Lemmas~\ref{wstarH} and~\ref{techn-terr} as granted and
prove Theorem~\ref{main}. Fix a sequence $\{p_n\}_{n\in\N}$ in
$\pp_0\setminus\{0\}$ satisfying (\ref{ppp1}). We describe an
inductive procedure of constructing sequences $\{\xi_k\}_{k\in\N}$
in $\pp_0$, $\{n_k\}_{k\in\N}$ of natural numbers and
$\{c_{2k}\}_{k\in\N}$ of positive numbers such that
\begin{itemize}\itemsep=-2pt
\item[(A0)]$\xi_1=z$ and $n_1=1$;
\item[(A1)]$\pi_{\xi_{[k]}}(z)>\frac12$ for each $k\in\N$, where
$\xi_{[k]}=(\xi_1,\dots,\xi_k)\in\pp_0^k$;
\item[(A2)]$n_k>n_{k-1}$ for $k\geq 2$;
\item[(A3)]$\xi_k=k(c_{k}z^{n_k}-p_{k/2})$ for even $k\geq 2$ and
$\xi_k=k(z(1+z)^{n_k}-p_{(k-1)/2})$ for odd $k\geq 3$.
\end{itemize}

First, we take $n_1=1$, $\xi_1=z$ and observe that
$\pi_{\xi_{[1]}}(a_0+a_1z+{\dots}+a_mz^m)=|a_0|+{\dots}+|a_m|$. In
particular, $\pi_{\xi_{[1]}}(z)=1>\frac12$. Thus (A0--A3) for $k=1$
are satisfied and we have got the basis of induction. It remains to
describe the induction step. Let $k\geq 2$ and $\xi_j$, $n_j$ for
$j<k$ and $c_j$ for $j<k$ satisfying (A0--A3) are already
constructed. We shall construct $\xi_k$, $n_k$ and $c_k$ (if $k$ is
even), satisfying (A1--A3).

Denote $\gamma=\gamma(\xi_{[k-1]})$. By Lemma~\ref{wstarH},
$\pi_{\xi_{[k-1]},z^n}(z)\to \pi_{\xi_{[k-1]}}(z)$ as $n\to\infty$.
By (A1) for $k-1$, $\pi_{\xi_{[k-1]}}(z)>\frac12$. Hence we can pick
$m\in\N$ such that
\begin{equation}\label{l0}
\text{$\textstyle \pi_{\xi_{[k-1]},z^n}(z)>\frac12$ for every $n\geq
m$.}
\end{equation}

{\bf Case~1:} $k$ is even. By (\ref{l0}), there is $n_k\in\N$ such
that $n_k>\max\{n_{k-1},\deg p_{k/2}\}$ and
$\pi_{\xi_{[k-1]},z^{n_k}}(z)>\frac12$. For $c>0$, we consider the
degree $n_k$ polynomial $q_c=k(cz^{n_k}-p_{k/2})\in\pp_0$ and denote
$\delta(c)=(2kc)^{-1/{n_k}}$. Clearly, $\delta(c)\to 0$ as
$c\to\infty$. By Lemma~\ref{techn},
\begin{equation}\label{NNN}
\text{$\delta(c)<\gamma$, $q_c(\delta(c)\D)\subseteq \D$ and all
zeros of $q_c$ are in $\delta(c)\D$ for all sufficiently large
$c>0$.}
\end{equation}
Since $\frac1{kc}q_c=z^{n_k}-\frac1{c}p_{k-1}\to z^{n_k}$ as
$c\to\infty$, Lemma~\ref{wstarH} implies that
\begin{equation}\label{NNNN}
\pi_{\xi_{[k-1]},q_c}(p)=\pi_{\xi_{[k-1]},\frac1{kc}q_c}(p)\to
\pi_{\xi_{[k-1]},z^{n_k}}(p)\ \ \text{as $c\to\infty$ for every
$p\in\pp$.}
\end{equation}
Using (\ref{NNNN}), (\ref{NNN}) and the inequality
$\pi_{\xi_{[k-1]}}(z)>\frac12$, we can choose $c_k>0$ large enough
in such a way that $\delta=\delta(c_k)<\gamma$, all zeros of
$\xi_k=q_{c_k}=k(c_kz^{n_k}-p_{k/2})$ are in $\delta\D$,
$\xi_k(\delta\D)\subseteq \D$ and
$\pi_{\xi_{[k-1]},\xi_{k}}(z)>\frac12$. By Lemma~\ref{kaka},
$\pi_{\xi_{[k]}}(p)\geq \pi_{\xi_{[k-1]},\xi_k}(p)$ for every
$p\in\pp$. In particular, $\pi_{\xi_{[k]}}(z)\geq
\pi_{\xi_{[k-1]},\xi_k}(z)>\frac12$. It remains to notice that
(A1--A3) are satisfied.

{\bf Case~2:} $k$ is odd. By (\ref{l0}),
$\pi_{\xi_{[k-1]},z^m}(z)>\frac12$. By Corollary~\ref{techn-terr1},
there is $l\in\N$ and sequences $\{W_n\}_{n\geq l}$ of connected
non-empty open subsets of $\C$ containing $0$ and $\{r_n\}_{n\geq
l}$ of degree $m$ polynomials such that $r_n\to z^m$,
$\lim\limits_{n\to\infty}\sup\limits_{z\in W_n}|z|=0$, each $r_n$ is
a divisor of $q_n=k(z(1+z)^n-p_{(k-1)/2})$, $q_n(W_n)\subseteq\D$
and all zeros of $r_n$ are in $W_n$ for each $n\in\N$. By
Lemma~\ref{wstarH}, $\pi_{\xi_{[k-1]},r_n}(z)\to
\pi_{\xi_{[k-1]},z^m}(z)>\frac12$ as $n\to\infty$ and therefore we
can pick $n_k>\max\{l,n_{k-1}\}$ such that
$\pi_{\xi_{[k-1]},r_{n_k}}(z)>\frac12$ and $W_{n_k}\subseteq
\gamma\D$. Put $\xi_k=q_{n_k}=k(z(1+z)^{n_k}-p_{(k-1)/2})$. By
Lemma~\ref{kaka}, $\pi_{\xi_{[k]}}(z)\geq
\pi_{\xi_{[k-1]},r_{n_k}}(z)>\frac12$. It remains to notice that
(A1--A3) are again satisfied.

This concludes the inductive construction of the sequences
$\{\xi_k\}_{k\in\N}$, $\{n_k\}_{k\in\N}$ and $\{c_{2k}\}_{k\in\N}$
satisfying (A0--A3). By Lemma~\ref{kaka}, $\pi_{\xi_{[k+1]}}(p)\leq
\pi_{\xi_{[k]}}(p)$  for every $p\in\pp$. Thus,
$\{\pi_{\xi_{[k]}}\}_{k\in\N}$ is a pointwise decreasing sequence of
submultiplicative norms on $\pp$. Hence the formula
$\pi(p)=\lim\limits_{k\to\infty}\pi_{\xi_{[k]}}(p)$ defines a
submultiplicative seminorm on $\pp$. By (A1),
$\pi_{\xi_{[k]}}(z)>\frac12$ for each $k\in\N$ and therefore
$\pi(z)\geq\frac12>0$. Hence $\pi$ is non-zero. From (\ref{piu}) it
immediately follows that $\pi_{\xi_{[k]}}(\xi_k)\leq 1$ for every
$k\in\N$. Indeed, $\ssub{\|u_k\|}{[k]}=1$ and
$\Phi_{\xi_{[k]}}(u_k)=\xi_k$. Hence $\pi(\xi_k)\leq
\pi_{\xi_{[k]}}(\xi_k)\leq 1$. By (A3),
$\xi_{2k}=2k(c_{2k}z^{n_{2k}}-p_{k})$ for $k\in\N$. Hence
$\pi(c_{2k}z^{n_{2k}}-p_{k})\leq \frac1{2k}$ for every $k\in\N$ and
therefore $\pi(c_{2k}z^{n_{2k}}-p_{k})\to 0$. By (A3),
$\xi_{2k+1}=(2k+1)(z(1+z)^{n_{2k+1}}-p_{k})$ for $k\in\N$. Hence
$\pi(z(1+z)^{n_{2k+1}}-p_{k})\leq \frac1{2k+1}$ for every $k\in\N$
and therefore $\pi(z(1+z)^{n_{2k+1}}-p_{k})\to 0$. Thus all
conditions of Lemma~\ref{z2} are satisfied. By Lemma~\ref{z2}, the
restriction of $\pi$ to $\pp_0$ is a submultiplicative norm on
$\pp_0$ and the completion of the normed algebra $(\pp_0,\pi)$ is an
infinite dimensional chaotic Banach algebra with $z$ being a chaotic
element. The proof of Theorem~\ref{main} modulo Lemma~\ref{wstarH}
and~\ref{techn-terr} is complete.

\section{Proof of Lemma~\ref{wstarH}}

\begin{lemma}\label{wsn} Let $X$ be a Banach space and
$\{L_n\}_{n\in\N}$ be a decreasing sequence of $*$-weak closed
linear subspaces of $X^*$ and $L=\bigcap\limits_{n=1}^\infty L_n$.
Then for every $g\in X^*$,
$\ssub{\|g+L_n\|}{X^*/L_n}\to\ssub{\|g+L\|}{X^*/L}$.
\end{lemma}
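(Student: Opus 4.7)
The plan is to use standard duality for $*$-weakly closed subspaces. For any $*$-weakly closed subspace $M\subseteq X^*$, denote the pre-annihilator by $M_\perp=\{x\in X:f(x)=0\ \text{for all}\ f\in M\}$. Then $M=(M_\perp)^\perp$ by the bipolar theorem, and the restriction map $g+M\mapsto g|_{M_\perp}$ yields an isometric isomorphism of the quotient $X^*/M$ (with its quotient norm) onto $(M_\perp)^*$; in particular
$$\ssub{\|g+M\|}{X^*/M}=\sup\{|g(x)|:x\in M_\perp,\ \|x\|\leq 1\}.$$
I would apply this identity to each $L_n$ and to $L$ so as to translate the claim into a statement about pre-annihilators in $X$ rather than cosets in $X^*$.

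Next I would show that $L_\perp$ is the norm closure in $X$ of $\bigcup_{n\in\N}(L_n)_\perp$. Since $\{L_n\}$ is decreasing, $\{(L_n)_\perp\}$ is increasing, so its union is already a linear subspace of $X$. Using the general formula $\bigl(\bigcup_n S_n\bigr)^\perp=\bigcap_n S_n^\perp$ together with $((L_n)_\perp)^\perp=L_n$ (because $L_n$ is $*$-weakly closed), one gets
$$\Bigl(\bigcup_n (L_n)_\perp\Bigr)^\perp=\bigcap_n \bigl((L_n)_\perp\bigr)^\perp=\bigcap_n L_n=L,$$
and therefore $L_\perp=\overline{\bigcup_n (L_n)_\perp}$ by a second application of the bipolar theorem (the one valid for subspaces of $X$ itself).

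The limit then falls out. Since $(L_n)_\perp\subseteq (L_{n+1})_\perp\subseteq L_\perp$, the sequence $\ssub{\|g+L_n\|}{X^*/L_n}$ is non-decreasing and bounded above by $\ssub{\|g+L\|}{X^*/L}$. Conversely, fix $\epsilon>0$, pick $x\in L_\perp$ with $\|x\|\leq 1$ and $|g(x)|>\ssub{\|g+L\|}{X^*/L}-\epsilon$, and use the density established above to find $n_0\in\N$ and $y\in (L_{n_0})_\perp$ with $\|x-y\|<\epsilon$. Then $\|y\|\leq 1+\epsilon$, $|g(y)|\geq \ssub{\|g+L\|}{X^*/L}-\epsilon-\|g\|\epsilon$, and
$$\ssub{\|g+L_{n_0}\|}{X^*/L_{n_0}}\geq \frac{|g(y)|}{\|y\|}\geq \frac{\ssub{\|g+L\|}{X^*/L}-(1+\|g\|)\epsilon}{1+\epsilon},$$
which forces $\liminf_n\ssub{\|g+L_n\|}{X^*/L_n}\geq \ssub{\|g+L\|}{X^*/L}$ upon letting $\epsilon\to 0$.

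There is no substantive obstacle: the whole argument is bookkeeping around the isometric identification $X^*/M\cong (M_\perp)^*$ for $*$-weakly closed $M$ and the norm-closure description of $L_\perp$, both of which are standard consequences of Hahn--Banach and the bipolar theorem. The only spot requiring care is keeping track of which closure (norm versus $*$-weak) is in use at each step.
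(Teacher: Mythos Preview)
Your proof is correct, but it proceeds by a genuinely different route from the paper's. The paper argues directly in $X^*$: it picks near-optimizers $f_n\in L_n$ with $\|g+f_n\|\to c=\lim_n\|g+L_n\|$, invokes Banach--Alaoglu to extract a $*$-weak accumulation point $f$ of $\{f_n\}$, observes that $f\in\bigcap_n L_n=L$ by $*$-weak closedness of each $L_n$, and concludes $\|g+L\|\leq\|g+f\|\leq c$ from $*$-weak lower semicontinuity of the norm. You instead dualize, using the isometry $X^*/M\cong(M_\perp)^*$ for $*$-weakly closed $M$ to rewrite each quotient norm as a supremum over the unit ball of the pre-annihilator, and then reduce everything to the density statement $L_\perp=\overline{\bigcup_n(L_n)_\perp}$, which you obtain from the bipolar theorem. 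The paper's argument is shorter and uses compactness in a single stroke; yours is more structural, making explicit exactly where $*$-weak closedness enters (namely, through $((L_n)_\perp)^\perp=L_n$), and it avoids any sequential/net extraction. Both are standard and either would be acceptable here.
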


\begin{proof} Let $g\in X^*$. Since $\ssub{\|g+L_n\|}{X^*/L_n}\leq
\ssub{\|g+L_{n+1}\|}{X^*/L_{n+1}} \leq \ssub{\|g+L\|}{X^*/L}$ for
each $n\in\N$, the sequence $\ssub{\|g+L_n\|}{X^*/L_n}$ converges to
$c\in\R_+$ and $c\leq c_1=\ssub{\|g+L\|}{X^*/L}$. It remains to show
that $c\geq c_1$.

By definition of the quotient norms, we can find $f_n\in L_n$ for
$n\in\N$ such that $\ssub{\|g+f_n\|}{X^*}\to c$. Since $\{f_n\}$ is
a bounded sequence in $X^*$, and closed balls in $X^*$ are $*$-weak
compact, there is a $*$-weak accumulation point $f$ of the sequence
$\{f_n\}$. Since $f_m\in L_n$ for $m\geq n$ and each $L_n$ is
$*$-weak closed, $f\in L_n$ for every $n\in\N$. That is, $f\in L$.
Since $\ssub{\|g+f_n\|}{X^*}\to c$, the ball $\{h\in X^*:\|h\|\leq
c\}$ is $*$-weak compact and $g+f$ is a $*$-weak accumulation point
of $\{g+f_n\}$, we get $\ssub{\|g+f\|}{X^*}\leq c$. Since $f\in L$,
$c_1=\ssub{\|g+L\|}{X^*/L}\leq \ssub{\|g+f\|}{X^*}\leq c$, which
completes the proof.
\end{proof}

Note that the same statement with $*$-weak closeness replaced by the
norm closeness is false.

\begin{lemma}\label{quo} Let $m\in\N$, $X$ be a normed space,
$z_j\in X$ and $f_j,f_{j,k}\in X^*$ be such that
$f_j(z_r)=f_{j,k}(z_r)=\delta_{j,r}$ for $1\leq j,r\leq m$ and
$k\in\N$ and $\lim\limits_{k\to\infty}\|f_{j,k}-f_k\|=0$ for $1\leq
j\leq m$. Then
\begin{equation}\label{bubu}
\ssub{\|x+Y_k\|}{X/Y_k}\to \ssub{\|x+Y\|}{X/Y}\ \ \text{for each
$x\in X$, where $Y=\bigcap\limits_{j=1}^m \ker f_j$ and
$Y_k=\bigcap\limits_{j=1}^m \ker f_{j,k}$ for $k\in\N$.}
\end{equation}
\end{lemma}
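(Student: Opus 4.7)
The plan is to reduce the statement to a comparison of annihilator subspaces via the dual description of the quotient norm, and then exploit the norm convergence $\|f_{j,k}-f_j\|\to 0$ (I read the $f_k$ in the hypothesis as a typo for $f_j$, since no $f_k$ is defined elsewhere). First I would observe that the biorthogonality $f_j(z_r)=\delta_{j,r}$ forces the decomposition $X=Y\oplus\spann\{z_1,\dots,z_m\}$ and identifies $Y^\perp=\spann\{f_1,\dots,f_m\}$: any $f\in Y^\perp$ satisfies $f=\sum_j f(z_j)f_j$ because the difference vanishes on both $Y$ and each $z_r$. Exactly the same argument gives $Y_k^\perp=\spann\{f_{1,k},\dots,f_{m,k}\}$. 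The standard duality identity then yields
\[
\ssub{\|x+Y\|}{X/Y}=\sup\bigl\{|g(x)|:g\in Y^\perp,\ \|g\|\leq1\bigr\},
\]
and analogously for $Y_k$.

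Next I would compare the two suprema by transporting functionals from $Y_k^\perp$ to $Y^\perp$ through their coefficients in the given bases. Any $g\in Y_k^\perp$ expands uniquely as $g=\sum_j\lambda_jf_{j,k}$, and evaluating at $z_r$ gives $\lambda_r=g(z_r)$, so $|\lambda_j|\leq\|g\|\,\|z_j\|$. Setting $h=\sum_j\lambda_jf_j\in Y^\perp$, one finds
\[
\|h-g\|\leq\sum_{j=1}^m|\lambda_j|\,\|f_j-f_{j,k}\|\leq\|g\|\,\epsilon_k,\quad\text{where}\quad \epsilon_k=\sum_{j=1}^m\|z_j\|\,\|f_j-f_{j,k}\|\to 0.
\]
Hence $\|h\|\leq(1+\epsilon_k)\|g\|$ and $|g(x)|\leq|h(x)|+\epsilon_k\|g\|\,\|x\|$. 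Taking the supremum over $g$ with $\|g\|\leq1$ gives
\[
\ssub{\|x+Y_k\|}{X/Y_k}\leq (1+\epsilon_k)\,\ssub{\|x+Y\|}{X/Y}+\epsilon_k\|x\|.
\]
The reverse inequality is obtained by the same construction with the roles of $f_j$ and $f_{j,k}$ interchanged: given $h\in Y^\perp$ with $\|h\|\leq1$, form $g=\sum_j h(z_j)f_{j,k}\in Y_k^\perp$, repeat the estimate, and conclude
\[
\ssub{\|x+Y\|}{X/Y}\leq(1+\epsilon_k)\,\ssub{\|x+Y_k\|}{X/Y_k}+\epsilon_k\|x\|.
\]
Letting $k\to\infty$ delivers (\ref{bubu}).

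The only real point of attention is the identification $Y^\perp=\spann\{f_1,\dots,f_m\}$ (and its $Y_k$-analogue), which however falls out directly from the biorthogonality data; everything else is a routine finite-dimensional $\epsilon$-estimate, legitimised by the uniform bound $|\lambda_j|\leq\|z_j\|$ on the expansion coefficients. No compactness, no $*$-weak machinery of Lemma~\ref{wsn}, and no completeness of $X$ is needed.
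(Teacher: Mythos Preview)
Your proof is correct and takes a genuinely dual route to the paper's. The paper works primally in $X$: it picks near-minimisers $u_k\in Y$ and $w_k\in Y_k$ for the two quotient norms, then corrects them via $u'_k=u_k-\sum_j f_{j,k}(u_k)z_j\in Y_k$ and $w'_k=w_k-\sum_j f_j(w_k)z_j\in Y$, observing that $\|u'_k-u_k\|\to0$ and $\|w'_k-w_k\|\to0$ because the relevant functional values are $(f_{j,k}-f_j)(u_k)$ and $(f_j-f_{j,k})(w_k)$ applied to bounded sequences. You instead invoke the dual formula $\|x+Y\|=\sup\{|g(x)|:g\in Y^\perp,\ \|g\|\leq1\}$, identify $Y^\perp$ and $Y_k^\perp$ explicitly as the spans of the given functionals via biorthogonality, and transport functionals between the two annihilators with the same $\epsilon_k$-control. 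The two arguments are mirror images: the paper corrects vectors using the $z_j$, you correct functionals using the $f_j$. Your version has the mild advantage of making the quantitative dependence on $\epsilon_k=\sum_j\|z_j\|\,\|f_j-f_{j,k}\|$ explicit and uniform in $x$ up to the factor $\|x\|$; the paper's version avoids invoking Hahn--Banach for the dual description of the quotient norm. Both are elementary and neither needs completeness of $X$. Your reading of the typo ($f_k$ should be $f_j$) is correct.
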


\begin{proof} Let $x\in X$. Then we can pick two sequences
$\{u_k\}_{k\in\N}$ and $\{w_k\}_{k\in\N}$ in $X$ such that
\begin{equation}\label{bubu1}
\text{$u_k\in Y$ and $w_k\in Y_k$ for $k\in\N$,
$\|x+u_k\|\to\ssub{\|x+Y\|}{X/Y}$ and
$(\|x+w_k\|-\ssub{\|x+Y_k\|}{X/Y_k})\to 0$.}
\end{equation}
The inequalities $\ssub{\|x+Y_k\|}{X/Y_k}\leq \|x\|$ together with
(\ref{bubu1}) imply that $\{u_k\}_{k\in\N}$ and $\{w_k\}_{k\in\N}$
are bounded: $\|u_k\|\leq C$ and $\|w_k\|\leq C$ for every $k\in\N$,
where $C$ is a positive constant. Consider
$$
u'_k=u_k-\sum_{j=1}^n f_{j,k}(u_k)z_j\ \ \ \text{and}\ \ \
w'_k=w_k-\sum_{j=1}^n f_{j}(w_k)z_j\ \ \ \text{for $k\in\N$}.
$$
Since $f_j(z_r)=f_{j,k}(z_r)=\delta_{j,r}$,
$f_{j,k}(u'_k)=f_{j}(w'_k)=0$ for $1\leq j\leq m$ and $k\in\N$,
$u'_k\in Y_k$ and $w'_k\in Y$ for each $k\in\N$. Since $u_k\in Y$
and $w_k\in Y_k$, $f_j(u_k)=f_{j,k}(w_k)=0$ for $1\leq j\leq m$ and
$k\in\N$. Hence
$$
|f_{j,k}(u_k)|=|(f_{j,k}-f_j)(u_k)|\leq C\|f_{j,k}-f_j\|\to 0\ \
\text{and}\ \ |f_{j}(w_k)|=|(f_{j}-f_{j,k})(w_k)|\leq
C\|f_{j,k}-f_j\|\to 0.
$$
By the above two displays, $\|u'_k-u_k\|\to 0$ and $\|w'_k-w_k\|\to
0$. Since $u'_k\in Y_k$, $\ssub{\|x+Y_k\|}{X/Y_k}\leq \|x+u'_k\|$.
According to (\ref{bubu1}),
$$
\ssub{\|x+Y\|}{X/Y}=\lim_{k\to\infty}\|x+u_k\|=\lim_{k\to\infty}\|x+u'_k\|\geq
\slim_{k\to \infty} \ssub{\|x+Y_k\|}{X/Y_k}.
$$
Since $w'_k\in Y$, $\ssub{\|x+Y\|}{X/Y}\leq \|x+w'_k\|$. Using
(\ref{bubu1}), we get
$$
\ssub{\|x+Y\|}{X/Y}\leq\ilim_{k\to\infty}
\|x+w'_k\|=\ilim_{k\to\infty}\|x+w_k\|=\ilim_{k\to\infty}\ssub{\|x+Y_k\|}{X/Y_k}.
$$
The required equality (\ref{bubu}) follows from the last two
displays.
\end{proof}

The next bunch of facts, we need, is about Van-der-Monde-like
determinants. The Van-der-Monde matrix is the matrix of the shape
$M_n(\lambda)=\{\lambda_j^{r-1}\}_{j,r=1}^n$ with
$\lambda=(\lambda_1,\dots,\lambda_n)\in\C^n$:
$$
M_n(\lambda)=\begin{pmatrix}
1&\lambda_1&\lambda_1^2&\dots&\lambda_1^{n-1}\\
1&\lambda_2&\lambda_2^2&\dots&\lambda_2^{n-1}\\
\vdots&\vdots&\vdots&\ddots&\vdots\\
1&\lambda_n&\lambda_n^2&\dots&\lambda_n^{n-1}
\end{pmatrix}.
$$
It is well-known \cite{vdm}, that the determinant of $M_n(\lambda)$
is given by the formula
\begin{equation}\label{vdmdet}
\det M_n(\lambda)=\prod_{1\leq j< r\leq n}(\lambda_r-\lambda_j).
\end{equation}
Clearly $\det M_n(\lambda)$ is a non-zero homogeneous polynomial in
the variables $\lambda_1,\dots,\lambda_n$ of degree
$\frac{n(n-1)}{2}$ (a homogeneous polynomial of degree $m$ is a
linear combination of monomials of degree $m$ and therefore the zero
polynomial is a homogeneous polynomial of any degree). We need the
related matrices $M_{n,j,m}(\lambda)$ with $1\leq j\leq n$ and
$m\in\Z_+$, obtained from $M_n(\lambda)$ by replacing the $j^{\rm
th}$ column by the column vector with $\{\lambda_r^m\}_{r=1}^n$:
$$
\text{$M_{n,j,m}(\lambda)$ is $M_n(\lambda)$ with the $j^{\rm th}$
column}\ \ \begin{pmatrix}\lambda_1^{j-1}\\ \vdots\\
\lambda_n^{j-1}\end{pmatrix}\ \ \text{replaced by the column}\ \
\begin{pmatrix}\lambda_1^{m}\\ \vdots\\
\lambda_n^{m}\end{pmatrix}.
$$
Clearly $\det M_{n,j,m}(\lambda)\in \C[\lambda_1,\dots,\lambda_n]$.
Since a permutation of two rows multiplies the determinant of a
matrix by $-1$, the polynomial $\det M_{n,j,m}(\lambda)$ is
antisymmetric. It is easy to see that $\det M_{n,j,m}(\lambda)$ is a
homogeneous polynomial of degree $\frac{n(n-1)}{2}+m-j+1$. Observe
that $\det M_{n,j,m}(\lambda)=0$ if $\lambda_j=\lambda_r$ for some
$1\leq j<r\leq n$. Indeed, in this case $M_{n,j,m}(\lambda)$ has two
identical rows. Hence $\lambda_r-\lambda_j$ is a divisor of $\det
M_{n,j,m}(\lambda)$ for $1\leq j<r\leq n$ and therefore, by
(\ref{vdmdet}), $\det M_n(\lambda)$ is a divisor of $\det
M_{n,j,m}(\lambda)$. Thus
\begin{equation}\label{pnjm}
P_{n,j,m}(\lambda)=\frac{\det M_{n,j,m}(\lambda)}{\det
M_{n}(\lambda)}\in \C[\lambda_1,\dots,\lambda_n]
\end{equation}
and the polynomial $P_{n,j,m}$ is symmetric since both $\det
M_{n,j,m}(\lambda)$ and $\det M_{n}(\lambda)$ are antisymmetric.
Since $M_{n,j,j-1}(\lambda)=M_n(\lambda)$, $P_{n,j,j-1}(\lambda)=1$
for $1\leq j\leq n$. If $0\leq m\leq n-1$ and $m\neq j-1$, then the
matrix $M_{n,j,m}(\lambda)$ has two identical columns and therefore
$\det M_{n,j,m}(\lambda)=0$. Hence,
\begin{equation}\label{basis}
P_{n,j,m}(\lambda)=\delta_{m,j-1}\quad \text{for $0\leq m\leq n-1$}.
\end{equation}
Consider the following two matrices with $n$ rows and infinitely
many columns $A_n(\lambda)=\{\lambda_j^{r-1}\}_{1\leq j\leq n,\
r\in\N}$ and $B_n(\lambda)=\{P_{n,j,r-1}(\lambda)\}_{1\leq j\leq n,\
r\in\N}$:
$$
B_n(\lambda)=\begin{pmatrix}
P_{n,1,0}(\lambda)&P_{n,1,1}(\lambda)&P_{n,1,2}(\lambda)&\dots\\
P_{n,2,0}(\lambda)&P_{n,2,1}(\lambda)&P_{n,2,2}(\lambda)&\dots\\
\vdots&\vdots&\vdots&\vdots\\
P_{n,n,0}(\lambda)&P_{n,n,1}(\lambda)&P_{n,n,2}(\lambda)&\dots
\end{pmatrix}\quad\text{and}\quad
A_n(\lambda)=\begin{pmatrix}
1&\lambda_1&\lambda_1^2&\lambda_1^3&\dots\\
1&\lambda_2&\lambda_2^2&\lambda_2^3&\dots\\
\vdots&\vdots&\vdots&\vdots&\vdots\\
1&\lambda_n&\lambda_n^2&\lambda_n^3&\dots
\end{pmatrix}.
$$
The way we have defined $P_{n,j,m}(\lambda)$ together with an
elementary linear algebra exercise yields
\begin{equation}\label{prod}
M_n(\lambda)B_n(\lambda)=A_n(\lambda).
\end{equation}
We need upper estimates for the values of $P_{n,j,m}$. For $m\geq
n$, we do not have an explicit formula for $P_{n,j,m}$, but we can
find a recurrent formula, which allows to obtain the necessary
estimates.

\begin{lemma}\label{estimp} Let $n\in\N$ and $0<\alpha<\beta$.
Then there exists $c=c(n,\alpha,\beta)>0$ such that
\begin{equation}\label{estp}
\text{$|P_{n,j,m}(\lambda)|\leq c\beta^m$ whenever $m\in\Z_+$,
$1\leq j\leq n$ and $\max\limits_{1\leq r\leq
n}|\lambda_r|\leq\alpha$.}
\end{equation}
\end{lemma}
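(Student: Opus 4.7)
The plan is to identify $P_{n,j,m}(\lambda)$ with a coefficient of the polynomial remainder of $z^m$ modulo $p(z) := \prod_{r=1}^{n}(z - \lambda_r)$, represent that remainder by a Cauchy contour integral, and estimate on a circle of radius $\beta$.

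First, I would reinterpret the identity $M_n(\lambda) B_n(\lambda) = A_n(\lambda)$ of (\ref{prod}): when the coordinates $\lambda_1,\dots,\lambda_n$ are pairwise distinct it says that the polynomial $Q_m(z) := \sum_{j=1}^{n} P_{n,j,m}(\lambda)\, z^{j-1}$, of degree $<n$, interpolates $z\mapsto z^m$ at the nodes $\lambda_r$, and is therefore equal to the remainder $r_m(z) := z^m \bmod p(z)$ of Euclidean division in $\C[z]$. Since $P_{n,j,m}(\lambda)$ and the coefficients of $r_m(z)$ are both polynomials in $\lambda$ that agree on the Zariski-dense locus of $n$-tuples with distinct entries, the identification $P_{n,j,m}(\lambda) = [z^{j-1}]\, r_m(z)$ holds for every $\lambda \in \C^n$.

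Second, I would express $r_m$ via a contour integral. Set $q(w,z) := (p(w)-p(z))/(w-z)$, which is a polynomial in $(w,z)$. A direct residue calculation at the poles $w=\lambda_r$ (or, equivalently, the Lagrange interpolation formula followed by continuity in $\lambda$) gives
\begin{equation*}
r_m(z) \;=\; \frac{1}{2\pi i}\oint_{|w|=\beta} \frac{w^m\, q(w,z)}{p(w)}\,dw,
\end{equation*}
because $|\lambda_r|\leq\alpha<\beta$ for every $r$ and the integrand has no pole at $w=z$. Writing $q(w,z) = \sum_{l=0}^{n-1} q_l(w)\, z^l$ with $q_{j-1}(w) = \sum_{k=j}^{n} a_k(\lambda)\, w^{k-j}$, where $a_k(\lambda)$ denotes the $z^k$-coefficient of $p(z)$, one reads off
\begin{equation*}
P_{n,j,m}(\lambda) \;=\; \frac{1}{2\pi i}\oint_{|w|=\beta}\frac{w^m\, q_{j-1}(w)}{p(w)}\,dw.
\end{equation*}

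Finally I would estimate on the contour. Since $|\lambda_r|\leq\alpha$, one has $|p(w)|=\prod_r|w-\lambda_r|\geq(\beta-\alpha)^n$, while $|a_k(\lambda)|=|e_{n-k}(\lambda)|\leq\binom{n}{n-k}\alpha^{n-k}$ yields $\sum_{k=0}^n |a_k(\lambda)|\,\beta^k\leq(\alpha+\beta)^n$ by the binomial theorem, and hence $|q_{j-1}(w)|\leq\beta^{-j}(\alpha+\beta)^n$ on $|w|=\beta$. Bounding the contour length by $2\pi\beta$ produces
\begin{equation*}
|P_{n,j,m}(\lambda)| \;\leq\; \frac{\beta(\alpha+\beta)^n}{(\beta-\alpha)^n}\,\beta^{m-j}\;\leq\; c(n,\alpha,\beta)\,\beta^m,
\end{equation*}
with $c := \max(1,\beta^{1-n})\cdot\beta(\alpha+\beta)^n/(\beta-\alpha)^n$ serving uniformly in $j\in\{1,\dots,n\}$ and $m\in\Z_+$. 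The only mildly subtle point is transporting the Lagrange interpolation identity across the coincidence locus of the $\lambda_r$, but the polynomial nature of both sides handles this at once.
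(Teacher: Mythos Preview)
Your argument is correct and takes a genuinely different route from the paper's. The paper proceeds by induction on $n$: it first derives recurrence relations expressing $P_{n,j,m}$ in terms of $P_{n-1,\cdot,\cdot}$ via row and column manipulations of the defining determinants (formulas (\ref{rec1})--(\ref{rec3})), then feeds the inductive bound $|P_{n-1,j,m}|\leq c_0\gamma^m$ (for an intermediate $\gamma\in(\alpha,\beta)$) into these recurrences to obtain $|P_{n,j,m}|\leq a m\gamma^m=O(\beta^m)$. Your approach bypasses the recursion entirely: you recognise $P_{n,j,m}(\lambda)$ as the $z^{j-1}$-coefficient of the polynomial remainder $z^m\bmod\prod_r(z-\lambda_r)$ (which is exactly what Cramer's rule applied to the interpolation system encodes), represent that remainder by a Cauchy integral on the circle $|w|=\beta$, and estimate directly. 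This is shorter, more conceptual, and produces an explicit constant. (Your displayed $c$ has a harmless slip when $\beta<1$: one needs $\max(\beta^{-1},\beta^{-n})$ in place of $\max(1,\beta^{1-n})$ to absorb $\beta^{-j}$ over $1\leq j\leq n$, but this is immaterial to the lemma.) The paper's argument, while lengthier, is purely algebraic and avoids any appeal to contour integration; yours trades that for brevity and a clean closed-form bound.
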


\begin{proof} We use the induction with respect to $n$. For $n=1$,
$P_{1,1,m}(\lambda)=\lambda_1^m$ and therefore
$|P_{1,1,m}(\lambda)|\leq \alpha^m\leq\beta^m$ for each $m\in\Z_+$
and $\lambda_1\in\alpha\DD$. Thus (\ref{estp}) is satisfied for
$n=1$ with $c(1,\alpha,\beta)=1$.

In order to run the inductive proof, we need recurrent formulas for
$P_{n,j,m}$. If $j>1$, the first column of $M_{n,j,m}(\lambda)$
consist of $1$'s. Subtracting the first row from all other rows, and
eliminating the first row and column of the resulting matrix, we get
an $(n-1)\times(n-1)$ matrix, whose determinant is the same as for
$M_{n,j,m}(\lambda)$. Then after dividing the $r^{\rm th}$ row of
the new matrix by $\lambda_{r+1}-\lambda_1$, doing a number of
determinant preserving manipulations with the columns and using the
column-linearity of the determinant, we get
\begin{align}\label{rec1}
&P_{n,j,m}(\lambda)=\sum_{s=0}^{m-n}
(\lambda_1^sP_{n-1,j-1,m-s-1}(\lambda')-
\lambda_1^{s+1}P_{n-1,j,m-s-1}(\lambda'))\ \ \ \text{if $1<j<n$ and
$m\geq n\geq3$,}
\\ \label{rec2}
&P_{n,n,m}(\lambda)=\lambda_1^{m-n+1}+
\sum_{s=0}^{m-n}\lambda_1^sP_{n-1,n-1,m-s-1}(\lambda')\ \ \ \text{if
$m\geq n\geq2$, \ where
$\lambda'=(\lambda_2,\lambda_3,\dots,\lambda_n)$.}
\end{align}
If $j=1$, we can divide the $r^{\rm th}$ row of $M_{n,1,m}(\lambda)$
by $\lambda_r$ for $1\leq r\leq n$ and then exchange the first and
the last columns to obtain the equality
$$
P_{n,1,m}(\lambda)=-\lambda_1\dots\lambda_nP_{n,n,m-1}(\lambda)\ \ \
\text{for $m\geq n\geq 2$}.
$$
Using (\ref{rec2}) and (\ref{basis}), we get
\begin{equation}\label{rec3}
\begin{array}{l}
\text{$P_{n,1,n}(\lambda)=-\lambda_1\dots\lambda_n$\ \ and}
\\
\displaystyle
P_{n,1,m}(\lambda)=-\lambda_1\dots\lambda_n\Bigl(\lambda_1^{m-n}+
\sum\limits_{s=0}^{m-n-1}\lambda_1^sP_{n-1,n-1,m-s-1}(\lambda')\Bigr)\
\ \ \text{for $m>n\geq 2$}.
\end{array}
\end{equation}

Now we assume that $n\geq 2$ and that the required estimate holds
for smaller values of $n$. Let $0<\alpha<\beta$. Pick $\gamma\in
(\alpha,\beta)$. By the induction hypothesis, there is $c_0>0$ such
that
\begin{align}\label{hyp}
&\text{$|P_{n-1,j,m}(w)|\leq c_0\gamma^m$ for any $m\in\Z_+$, $1\leq
j\leq n-1$ and $w\in(\alpha\DD)^{n-1}$.}
\\ \label{smallm}
&\text{By (\ref{basis}), $|P_{n,j,m}(\lambda)|\leq 1$\ \ \ for
$1\leq j\leq n$, $0\leq m\leq n-1$ and $\lambda\in(\alpha\DD)^n$.}
\end{align}
Using (\ref{rec1}), (\ref{rec2}) and (\ref{rec3}) together with
(\ref{hyp}) we find that there is $a=a(n,\alpha,\gamma)>0$ such that
\begin{equation}\label{bigm}
|P_{n,j,m}(\lambda)|\leq a m\gamma^m\ \ \ \text{for $1\leq j\leq n$,
$m\geq n$ and $\lambda\in(\alpha\DD)^n$.}
\end{equation}
Since $\frac{m\gamma^m}{\beta^m}\to 0$, (\ref{smallm}) and
(\ref{bigm}) imply that (\ref{estp}) is satisfied with some
$c=c(n,\alpha,\beta)>0$.
\end{proof}

\begin{lemma}\label{conti} Let $k,n\in\N$,
$\xi=(\xi_1,\dots,\xi_k)\in\pp_0^k$, $\xi_1=z$ and
$\gamma=\gamma(\xi)$. Then for every $\lambda\in(\gamma\D)^n$ and
$1\leq j\leq n$, the formula
\begin{equation}\label{fun}
\phi_{\lambda,j}:\aa^{[k]}\to\C, \quad
\phi_{\lambda,j}(a)=\sum_{m=0}^\infty
P_{n,j,m}(\lambda)\alpha_m(a,\xi)
\end{equation}
defines a continuous linear functional on $\aa^{[k]}$, where where
$\alpha_m(a,\xi)$ are defined in $(\ref{tra})$. Moreover,
\begin{align}\label{fun1}
&\phi_{\lambda,j}(u_1^{r-1})=\delta_{j,r}\ \ \ \text{for $1\leq
j,r\leq n$ and}
\\ \label{fun2}
&I_{\xi,q_\lambda}=\bigcap_{j=1}^n\ker\phi_{\lambda,j},\ \ \
\text{where $\textstyle
q_{\lambda}(z)=\prod\limits_{j=1}^n(z-\lambda_j)$ and $\textstyle
I_{\xi,q_\lambda}$ is defined by $(\ref{ietq})$.}
\end{align}
Furthermore, the map $\lambda\mapsto \phi_{\lambda,j}$ from
$(\gamma\D)^n$ to $\bigl(\aa^{[k]}\bigr)^*$ is norm continuous for
$1\leq j\leq n$.
\end{lemma}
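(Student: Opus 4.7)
The plan is to handle the four assertions in Lemma~\ref{conti} in turn, leveraging the matrix identity $M_n(\lambda)B_n(\lambda)=A_n(\lambda)$ from (\ref{prod}), the uniform bounds of Lemma~\ref{estimp} and the Hadamard estimate $\slim_m\|\alpha_m(\cdot,\xi)\|^{1/m}\leq 1/\gamma$ noted just before (\ref{tra}). For $\lambda\in(\gamma\D)^n$ pick $\alpha,\beta,\rho$ with $\max_i|\lambda_i|<\alpha<\beta<\rho<\gamma$. By Lemma~\ref{estimp}, $|P_{n,j,m}(\lambda)|\leq c\beta^m$, while $\|\alpha_m(\cdot,\xi)\|\leq C/\rho^m$ eventually, which gives absolute, uniform-on-bounded-sets convergence of $\sum_m P_{n,j,m}(\lambda)\alpha_m(a,\xi)$ and thus a bounded linear functional $\phi_{\lambda,j}$. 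For (\ref{fun1}), since $\xi_1=z$ we have $\Phi_\xi(u_1^{r-1})=z^{r-1}$, so $\alpha_m(u_1^{r-1},\xi)=\delta_{m,r-1}$ and (\ref{basis}) delivers $\phi_{\lambda,j}(u_1^{r-1})=P_{n,j,r-1}(\lambda)=\delta_{j,r}$. Norm continuity of $\lambda\mapsto\phi_{\lambda,j}$ over each compact $K\subset(\gamma\D)^n$ is of the same flavor: bound $|P_{n,j,m}(\lambda)-P_{n,j,m}(\lambda')|$ by $\|\nabla P_{n,j,m}\|_{\infty,K}\cdot\|\lambda-\lambda'\|$ and control the polynomial's gradient on $K$ by $c'\beta^m$ via Cauchy estimates together with Lemma~\ref{estimp} applied on a slightly larger polydisc.

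The main step is (\ref{fun2}). Read row-wise, (\ref{prod}) gives $\sum_j\lambda_i^{j-1}P_{n,j,m}(\lambda)=\lambda_i^m$. Writing $q_\lambda(z)=\sum_{k=0}^n q_k z^k$ with $q_k=(-1)^{n-k}e_{n-k}(\lambda)\in\C[\lambda_1,\dots,\lambda_n]$, we deduce for each $l\geq 0$ and each $i$,
$$\sum_{j=1}^n\lambda_i^{j-1}\Bigl(\sum_{k=0}^n q_k P_{n,j,l+k}(\lambda)\Bigr)=\lambda_i^l q_\lambda(\lambda_i)=0.$$
On the dense open set where the $\lambda_i$'s are distinct, invertibility of the Van-der-Monde matrix $M_n(\lambda)$ forces the inner sum to vanish; as an identity of polynomials in $\lambda$, it then persists for every $\lambda\in\C^n$. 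Now if $a\in I_{\xi,q_\lambda}$, write $\Phi_\xi(a)=q_\lambda g$ with $g\in\hh_{\gamma(\xi)}$ (Remark~\ref{rema}); comparing Taylor coefficients yields $\alpha_m(a,\xi)=\sum_{k=0}^{\min(n,m)}q_k g_{m-k}$, and the geometric estimates from the first paragraph justify the Fubini swap giving $\phi_{\lambda,j}(a)=\sum_{l\geq 0}g_l\bigl(\sum_{k=0}^n q_k P_{n,j,l+k}(\lambda)\bigr)=0$. The reverse inclusion follows from a codimension count: (\ref{fun1}) makes $\phi_{\lambda,1},\dots,\phi_{\lambda,n}$ linearly independent, so $\bigcap_j\ker\phi_{\lambda,j}$ has codimension $n$; and Remark~\ref{rema} together with $\pp\subseteq\Phi_\xi(\aa^{[k]})$ shows that $I_{\xi,q_\lambda}$ also has codimension $n$.

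The delicate point is the case of coalescing $\lambda_i$'s, where $M_n(\lambda)$ is singular and the naive inversion of the system $M_n(\lambda)(\phi_{\lambda,j}(a))_j=(\Phi_\xi(a)(\lambda_i))_i$ fails to recover the $\phi_{\lambda,j}(a)$'s from the $f(\lambda_i)$'s. The polynomial-identity trick above is what overcomes this: the universal identity between the $q_k$'s and the $P_{n,j,m}$'s is established at generic $\lambda$ and extended by the identity theorem for polynomials, after which divisibility-by-$q_\lambda$ plus Fubini handles every $\lambda$ uniformly.
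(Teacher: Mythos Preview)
Your proof is correct and follows the same overall strategy as the paper: growth bounds from Lemma~\ref{estimp} combined with the Hadamard estimate to get continuity of $\phi_{\lambda,j}$, the identity (\ref{basis}) for (\ref{fun1}), a Van-der-Monde argument at generic $\lambda$ extended by the identity theorem for (\ref{fun2}), and a codimension count for the reverse inclusion. There are, however, two tactical differences worth noting. For (\ref{fun2}) the paper factors through the functionals $\psi_{\lambda,j}$ on $\hh_\gamma$, reduces by density to polynomial test functions $f$, and observes that for fixed $f$ the map $\lambda\mapsto\psi_{\lambda,j}(fq_\lambda)$ is a polynomial vanishing on the generic set; you instead isolate the clean universal identity $\sum_{k=0}^n q_k(\lambda)P_{n,j,l+k}(\lambda)=0$ first and then apply it to an arbitrary $g\in\hh_\gamma$ via Fubini, which avoids the density reduction. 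For norm continuity the paper uses a soft compactness argument (the $\psi_{\lambda,j}$ sit in a strongly compact subset of $\hh_\gamma^*$, hence the $\phi_{\lambda,j}$ in a norm-compact subset of $(\aa^{[k]})^*$, on which pointwise convergence upgrades to norm convergence), whereas you obtain an explicit local Lipschitz bound by combining Lemma~\ref{estimp} on a slightly larger polydisc with the Cauchy estimate for derivatives. Your route is more elementary and quantitative; the paper's is shorter but less informative.
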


\begin{proof} For $1\leq j\leq n$ and $\lambda\in(\gamma\D)^n$, consider
the functionals $\psi_{\lambda,j}:\hh_\gamma\to\C$ defined by the
formula
$$
\psi_{\lambda,j}\Bigl(\sum_{m=0}^\infty
f_mz^m\Bigr)=\sum_{m=0}^\infty P_{n,j,m}(\lambda)f_m.
$$
By Lemma~\ref{estimp}, $P_{n,j,m}(\lambda)=o(\beta^m)$ as
$m\to\infty$ for $1\leq j\leq n$ for some
$\beta=\beta(\lambda)<\gamma$. It follows that the functionals
$\psi_{\lambda,j}$ are well-defined and continuous. We already know
that $\Phi_\xi:\aa^{[k]}\to\hh_{\gamma}$ is a continuous algebra
homomorphism. Since $\phi_{\lambda,j}=\psi_{\lambda,j}\circ
\Phi_\xi$ and $\Phi:\aa^{[k]}\to\hh_\gamma$ is continuous, the
functionals $\phi_{\lambda,j}$ are also well-defined and continuous.
From the equality $\xi_1=z$ and the definition of $\phi_{\lambda,j}$
it follows that $\phi_{\lambda,j}(u_1^r)=P_{n,j,r}(\lambda)$ for
$r\geq 0$. Hence (\ref{basis}) implies (\ref{fun1}).

Let $J_\lambda$ be the ideal in $\hh_\gamma$, generated by
$q_\lambda$. Since $J_\lambda$ is a closed ideal in $\hh_\gamma$ of
codimension $n=\deg q_\lambda$ and
$I_{\xi,q_\lambda}=\Phi_\xi^{-1}(J_\lambda)$, the ideal
$I_{\xi,q_\lambda}$ is closed in $\aa^{[k]}$ and has codimension
$\leq n$. Since $u_1=z$, we see that $1,u_1,\dots,u_1^{n-1}$ are
linearly independent modulo $I_{\xi,q}$. Hence the codimension of
$I_{\xi,q_\lambda}$ in $\aa^{[k]}$ is exactly $n$. By (\ref{fun1}),
the functionals $\phi_{\lambda,1},\dots,\phi_{\lambda,n}$ are
linearly independent and therefore (\ref{fun2}) will be verified if
we prove that each $\phi_{\lambda,j}$ vanishes on
$I_{\xi,q_\lambda}$. Since $\phi_{\lambda,j}=\psi_{\lambda,j}\circ
\Phi_\xi$ and $I_{\xi,q_\lambda}=\Phi_\xi^{-1}(J_\lambda)$, it is
enough to demonstrate that each $\psi_{\lambda,j}$ vanishes on
$J_\lambda$. That is, it suffices to show that
$\psi_{\lambda,j}(fq_\lambda)=0$ for any $f\in\hh_\gamma$, $1\leq
j\leq n$ and $\lambda\in(\gamma\D)^n$. Since $\pp$ is dense in
$\hh_\gamma$, it is enough to consider $f\in\pp$. Fix $1\leq j\leq
n$ and $f\in\pp$. We have to verify that
$\psi_{\lambda,j}(fq_\lambda)=0$ for each $\lambda\in(\gamma\D)^n$.
Since $P_{n,j,m}(\lambda)\in\C[\lambda_1,\dots,\lambda_n]$,
$\lambda\mapsto \psi_{\lambda,j}(fq_\lambda)$ is a polynomial and it
suffices to verify the equality $\psi_{\lambda,j}(fq_\lambda)=0$ for
$\lambda$ from a dense subset of $(\gamma\D)^n$. Hence (\ref{fun2})
will be proved if we show that
$$
\text{$\psi_{\lambda,j}(fq_\lambda)=0$ for any pairwise different
$\lambda_1,\dots,\lambda_n$ in $\gamma\D$.}
$$
Let $\lambda_1,\dots,\lambda_n\in\gamma\D$ be pairwise different. By
(\ref{vdmdet}), the Van-der-Monde matrix $M_n(\lambda)$ is
invertible. Then (\ref{prod}) can be rewritten as
$B_n(\lambda)=M_n(\lambda)^{-1}A_n(\lambda)$. Hence each row of
$B_n(\lambda)$ is a linear combination of rows of $A_n(\lambda)$.
Thus there exist $c_1,\dots,c_n\in\N$ such that the sequence
$\{P_{n,j,r}(\lambda)\}_{r\in\Z_+}$ is the linear combination of the
sequences
$\{\lambda_1^r\}_{r\in\Z_+},\dots,\{\lambda_n^r\}_{r\in\Z_+}$ with
the coefficients $c_1,\dots,c_n\in \C$. It follows that
$$
\psi_{\lambda,j}(g)=\sum_{s=1}^n c_sg(\lambda_s)\ \ \text{for each
$g=\sum\limits_{m=0}^\infty g_nz^n\in\hh_\gamma$}.
$$
Since $(fq_\lambda)(\lambda_j)=0$ for $1\leq j\leq n$, the above
display implies that $\psi_{\lambda,j}(fq_\lambda)=0$, which
completes the proof of (\ref{fun2}). It remains to verify the norm
continuity of the maps $\lambda\mapsto \phi_{\lambda,j}$ from
$(\gamma\D)^n$ to $\bigl(\aa^{[k]}\bigr)^*$. Let
$\lambda\in(\gamma\D)^n$ and for each $s\in\N$,
$\lambda_s=(\lambda_{s,1},\dots,\lambda_{s,n})\in (\gamma\D)^n$ be
such that $\lambda_s\to\lambda$ in $\C^n$. We have to show that
$\|\phi_{\lambda_s,j}-\phi_{\lambda,j}\|\to 0$ as $s\to\infty$,
where the norm is taken in $\bigl(\aa^{[k]}\bigr)^*$. Pick
$\alpha\in (0,\gamma)$ such that $\lambda_{s,j}\in\alpha\DD$ and
$\lambda_j\in\alpha\DD$ for $1\leq j\leq n$ and $s\in\N$. Take any
$\beta\in(\alpha,\gamma)$. By Lemma~\ref{estimp}, there is $c>0$
such that
$$
\text{$|P_{n,j,m}(\lambda_s)|\leq c\beta^m$ and
$|P_{n,j,m}(\lambda)|\leq c\beta^m$ for $m\in\Z_+$ and $1\leq j\leq
n$.}
$$
Now let $K$ be the set of continuous linear functionals
$\psi:\hh_\gamma\to \C$ given by
$\smash{\psi(f)=\sum\limits_{m=0}^\infty \psi_mf_m}$, where
$f(z)=\sum\limits_{m=0}^\infty f_mz^m$, such that $|\psi_m|\leq
c\beta^m$ for $m\in\Z_+$. It is well-known and easy to see that $K$
is a compact subset of the dual space $\hh_\gamma^*$ equipped with
the strong topology \cite{rob}. Since
$\Phi_\xi:\aa^{[k]}\to\hh_\gamma$ is continuous, the dual operator
$\Phi_\xi^*:\hh^*_\gamma\to\bigl(\aa^{[k]}\bigr)^*$ is continuous
when both $\hh^*_\gamma$ and $\bigl(\aa^{[k]}\bigr)^*$ are equipped
with the strong topology \cite{rob}. Since $K$ is strongly compact
in $\hh^*_\gamma$ and the strong topology on the dual of a normed
space is the norm topology, $Q=\Phi^*_\xi(K)$ is norm compact in
$\bigl(\aa^{[k]}\bigr)^*$. By the above display,
$\psi_{\lambda_s,j}$ and $\psi_{\lambda,j}$ are all in $K$ and
therefore $\phi_{\lambda_s,j}=\Phi_\xi^*\psi_{\lambda_s,j}\in Q$ and
$\phi_{\lambda,j}=\Phi_\xi^*\psi_{\lambda,j}\in Q$. Since every,
$P_{n,j,m}(\lambda)$ depends polynomially and therefore continuously
on $\lambda$, it immediately follows that
$\phi_{\lambda_s,j}\to\phi_{\lambda,j}$ pointwise on the dense
subspace $\C[u_1,\dots,u_k]$ of $\aa^{[k]}$. Since the topology on
$\bigl(\aa^{[k]}\bigr)^*$ of pointwise convergence on
$\C[u_1,\dots,u_k]$ is Hausdorff and is weaker than the norm
topology, it must coincide with the norm topology on the norm
compact set $Q$. Since $\phi_{\lambda_s,j}\in Q$ and
$\phi_{\lambda,j}\in Q$, $\|\phi_{\lambda_s,j}-\phi_{\lambda,j}\|\to
0$ as $s\to\infty$ for $1\leq j\leq n$, as required.
\end{proof}

\begin{proof}[Proof of Lemma~\ref{wstarH}.] Recall that
$\xi=(\xi_1,\dots,\xi_k)\in\pp_0^k$, $\xi_1=z$ and for $q\in\pp$
with all zeros in $\gamma \D$ with $\gamma=\gamma(\xi)$, $I_{\xi,q}$
is the ideal in $\aa^{[k]}$ defined by (\ref{ietq}). By
Lemma~\ref{conti}, each $I_{\xi,q}$ is a closed ideal in $\aa^{[k]}$
of codimension $\deg q$. The ideal $I_{\xi}$ is also closed in
$\aa^{[k]}$ as the intersection of closed ideals $I_{\xi,z^n}$. The
inclusion $I_\xi\subseteq I_{\xi,q}$ is obvious.

Next, we observe that each $I_{\xi,z^n}$ is $*$-weak closed. Indeed,
by (\ref{ietn}), it is enough to show that each of the functionals
$a\mapsto\alpha_j(a,\xi)$ is $*$-weak continuous on $\aa^{[k]}$. The
latter is clear since each $\alpha_j(\cdot,\xi)$ is a finite linear
combination of the standard coordinate functionals on
$\aa^{[k]}=\ell_1(\Z_+^k)$. Since $I_{\xi,z^n}$ are $*$-weak closed,
Lemma~\ref{wsn} implies that
$\ssub{\|a+I_{\xi,z^n}\|}{A^{[k]}/I_{\xi,z^n}}\to
\ssub{\|a+I_{\xi}\|}{A^{[k]}/I_{\xi}}$ for every $a\in\aa^{[k]}$.
Thus (\ref{limi1}) is satisfied.

It remains to verify (\ref{limi2}). Let $q_n\in\pp$ for
$n\in\N\cup\{\infty\}$ be polynomials of degree $m\in\N$, whose
zeros are in $\gamma\D$ and the sequence $\{q_n\}_{n\in\N}$
converges to $q_\infty$. Without loss of generality, we may assume
that each $q_n$ is monic (=has the leading coefficient 1). Then,
taking into account that $q_n\to q_\infty$, we can write
$$
\text{$\textstyle q_\infty=\prod\limits_{j=1}^m(z-\lambda_{j})$ and
$\textstyle q_n=\prod\limits_{j=1}^m(z-\lambda_{j,n})$ with
$\lambda_{j},\lambda_{j,n}\in\gamma\D$ and
$\lambda_{j,n}\to\lambda_{j}$ as $n\to\infty$ for $1\leq j\leq m$.}
$$
Let $\lambda_n=(\lambda_{1,n},\dots,\lambda_{m,n})$ and
$\lambda=(\lambda_1,\dots,\lambda_m)$. Then $\lambda_n\to\lambda$ in
$\C^m$ and $\lambda$ and all $\lambda_n$ belong to $(\gamma\D)^m$.
Let $\phi_{\lambda_n,j}$ and $\phi_{\lambda,j}$ be the continuous
functionals on $\aa^{[k]}$ defined by (\ref{fun}). According to
Lemma~\ref{conti}, $\|\phi_{\lambda_n,j}-\phi_{\lambda,j}\|\to 0$ as
$n\to\infty$,
$\phi_{\lambda_n,j}(u_1^{r-1})=\phi_{\lambda,j}(u_1^{r-1})=\delta_{j,r}$
for $1\leq j,r\leq m$ and
$$
I_{\xi,q_\infty}=\bigcap_{j=1}^m \ker \phi_{\lambda,j}\ \
\text{and}\ \ I_{\xi,q_n}=\bigcap_{j=1}^m \ker \phi_{\lambda_n,j}\ \
\text{for $n\in\N$}.
$$
Now Lemma~\ref{quo} implies (\ref{limi2}). The proof of
Lemma~\ref{wstarH} is complete.
\end{proof}

\section{Proof of Lemma~\ref{techn-terr}}

Our main instrument is the argument principle \cite{rou}. We recall
the related basic concepts. An {\it oriented path} $\Gamma$ in $\C$
with the {\it source} $s(\Gamma)$ and the {\it end} $e(\Gamma)$ is a
set of the shape $\Gamma=\phi([a,b])$, where $\phi:[a,b]\to \C$ is
continuous, $\phi(a)=s(\Gamma)$, $\phi(b)=e(\Gamma)$ and
$\phi\bigr|_{(a,b)}$ is injective. Such a map $\phi$ is a {\it
parametrization} of the path $\Gamma$. The oriented path $\Gamma$ is
{\it closed} if $s(\Gamma)=e(\Gamma)$. If $\Gamma$ is an oriented
path in $\C$ and $f:\Gamma\to\C\setminus\{0\}$ is continuous, we can
find continuous $\phi:[a,b]\to\Gamma$ and $\psi:[a,b]\to\R$ such
that $\phi(a)=s(\Gamma)$, $\phi(b)=e(\Gamma)$ and
$\frac{f(\phi(t))}{|f(\phi(t))|}=e^{i\psi(t)}$ for every
$t\in[a,b]$. The number $\frac{\psi(b)-\psi(a)}{2\pi}$ does not
depend on the choice of $\phi$ and $\psi$ and is called the {\it
winding number of $f$ along the path $\Gamma$} and denoted
$w(f,\Gamma)$. Alternatively, $2\pi w(f,\Gamma)$ is the {\it
variation of the argument of $f$ along $\Gamma$}.

We need few well-known properties of the winding numbers. If
$\Gamma$ and $\Gamma'$ are two non-closed oriented paths with
$e(\Gamma)=s(\Gamma')$ and
$(\Gamma\setminus\{e(\Gamma),s(\Gamma)\})\cap
(\Gamma'\setminus\{e(\Gamma'),s(\Gamma')\})=\varnothing$, then
$\Gamma\cup\Gamma'$ can be naturally considered as an oriented path
with the source $s(\Gamma)$ and the end $e(\Gamma')$. Then
\begin{equation}\label{addi}
w(f,\Gamma\cup\Gamma')=w(f,\Gamma)+w(f,\Gamma')\ \ \text{for each
continuous $f:\Gamma\cup\Gamma'\to\C\setminus\{0\}$.}
\end{equation}

Variants of the following elementary property exist in the
literature under different names, one of which is the {\it dog on a
leash lemma}. If $\Gamma$ is an oriented path in $\C$ and
$f,g:\Gamma\to\C$ are continuous, then
\begin{equation}\label{appro}
|w(f+g,\Gamma)-w(f,\Gamma)|<1/2\ \ \ \text{if $|g(z)|<|f(z)|$ for
each $z\in\Gamma$.}
\end{equation}

It is easy to see that if $\Gamma$ is an oriented path,
$f:\Gamma\to\C\setminus\{0\}$ is continuous and $|w(f,\Gamma)|\geq
n/2$ with $n\in\N$, then $f$ crosses every line in $\C$ passing
through 0 at least $n$ times. In other words, if $c\in\T$, then
\begin{equation}\label{cros}
\textstyle |w(f,\Gamma)|<\frac{n+1}2\ \ \ \text{if
$\{z\in\Gamma:f(z)\in c\R\}$ consists of at most $n$ points.}
\end{equation}

We use the above property to prove the following lemma.

\begin{lemma}\label{strpo} If the oriented path $\Gamma$ in $\C$ is
an interval of a straight line, $f$ is a polynomial of degree at
most $m\in\Z_+$ and $g:\Gamma\to \C$ is a continuous map taking
values in a line in $\C$ passing through zero such that
$f(z)+g(z)\neq 0$ for every $z\in\Gamma$, then
$w(f+g,\Gamma)<\frac{m+1}{2}$.
\end{lemma}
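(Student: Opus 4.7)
The plan is to reduce everything to the crossing bound (\ref{cros}). Let $\ell\subset\C$ be the line through $0$ containing $g(\Gamma)$, and fix $c\in\T$ with $c\R=\ell$. Since $g(z)\in c\R$ for every $z\in\Gamma$, we obtain the key equivalence
\[
(f+g)(z)\in c\R\iff f(z)\in c\R,
\]
which transfers the crossing analysis from $f+g$ to $f$ alone. Parametrizing $\Gamma$ as $\{z_0+tv:t\in[a,b]\}$ for a direction vector $v\in\C\setminus\{0\}$, the map $t\mapsto c^{-1}f(z_0+tv)$ is a complex polynomial in the real variable $t$ of degree at most $m$, whose imaginary part is therefore a real polynomial in $t$ of degree $\leq m$; this imaginary part vanishes exactly when $f(z_0+tv)\in c\R$.

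Two cases then arise. If the imaginary part is the zero polynomial, then $f(\Gamma)\subseteq c\R$, so $f+g$ is a continuous nowhere-zero map from the connected set $\Gamma$ into $c\R\setminus\{0\}$; the normalization $(f+g)/|f+g|$ is then a continuous map into the discrete two-point set $\{c,-c\}$, hence constant, and $w(f+g,\Gamma)=0<(m+1)/2$. Otherwise the imaginary part is a nonzero real polynomial of degree at most $m$, so it has at most $m$ real zeros; consequently the set $\{z\in\Gamma:(f+g)(z)\in c\R\}$ has at most $m$ elements and (\ref{cros}) with $n=m$ yields $|w(f+g,\Gamma)|<(m+1)/2$, which is stronger than what is asked.

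The only subtlety worth flagging is the degenerate possibility $f(\Gamma)\subseteq\ell$, which can genuinely occur (e.g.\ $f(z)=z$, $\Gamma\subset\R$, $\ell=\R$) and forces the separate constant-argument reasoning above; outside of this case the lemma is a one-line application of the crossing estimate.
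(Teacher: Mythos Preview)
Your proof is correct and follows essentially the same route as the paper's: linearly parametrize $\Gamma$, rotate so that $g$ becomes real-valued, observe that $(f+g)(z)\in c\R\iff f(z)\in c\R$, and then split according to whether the imaginary part of the rotated polynomial $t\mapsto c^{-1}f(z_0+tv)$ vanishes identically (winding number $0$) or is a nonzero real polynomial of degree $\leq m$ (at most $m$ crossings, then apply (\ref{cros})). The only cosmetic differences are your choice of $c$ versus the paper's $c^{-1}$ and your more explicit treatment of the constant-argument case via the two-point image $\{c,-c\}$.
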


\begin{proof} Since $\Gamma$ is an interval of a straight line we
can parametrize $\Gamma$ by $\phi:[0,1]\to\C$, $\phi(t)=at+b$ with
$a,b\in\C$, $a\neq 0$. Since $g$ takes values in a line in $\C$
passing through zero, there is $c\in\T$ such that $g(z)\in c^{-1}\R$
for $z\in\Gamma$. Since the function $h(t)=\im\,cf(at+b)$ is a
polynomial with real coefficients of degree at most $m$, it either
vanishes identically on $[0,1]$ or has at most $m$ zeros on $[0,1]$.

If $h\equiv0$, then $f+g:I\to\C$ takes values in the line
$c^{-1}\R$. Hence $w(f+g,\Gamma)=0<\frac{m+1}{2}$. If
$h\not\equiv0$, then the set $C=\{t\in[0,1]:h(t)=0\}$ consists of at
most $m$ points. It is easy to see that the set
$C'=\{z\in\Gamma:(f+g)(z)\in c^{-1}\R\}$ coincides with $\{at+b:t\in
C\}$ and therefore $C'$ consists of at most $m$ points. By
(\ref{cros}), $w(f+g,\Gamma)<\frac{m+1}{2}$.
\end{proof}

Finally, we remind the {\it argument principle}.

\medskip

\noindent {\bf Argument Principle.} \ \it Let $U$ be a bounded open
subset of $\C$, whose boundary is a closed oriented path $\Gamma$,
which encircles $U$ counterclockwise. Let also $f:\overline U\to\C$
be a continuous function such that $f$ is holomorphic on $U$ and
$0\notin f(\Gamma)$. Then $w(f,\Gamma)$ is exactly the number of
zeros of $f$ in $U$ counted with multiplicity.\rm

\medskip

We are ready to prove Lemma~\ref{techn-terr}. Let $k,\delta>0$,
$p\in\pp\setminus\{0\}$ and $m\in\N$. We have to show that for every
sufficiently large $n\in\N$, there exists a connected open set
$W_n\subset \C$ such that $0\in W_n\subseteq\delta\D$ and the
polynomial $q_n=kz((1+z)^n-p)$ has at least $m$ zeros in $W_n$ and
satisfies $q_n(W_n)\subseteq\D$.

Since at most one of the polynomials $q_n$ can be zero, there is
$n_0\in\N$ such that $q_n\neq 0$ for $n\geq n_0$. Let $c>1$ be such
that $|p(z)|\leq c$ for every $z\in\D$. Pick $\alpha\in (0,1)$ such
that $\alpha<\delta$, $\alpha<\frac1{3kc}$, the circle $(\sin
\alpha)\T$ contains no zeros of $p$ and the rays
$\{-1+te^{i\alpha}:t>0\}$ and $\{-1+te^{-i\alpha}:t>0\}$ contain no
zeros of $q_n$ for every $n\geq n_0$. For every $n\in\N$, let
$\epsilon_n=(2c)^{1/n}$. Clearly $\{\epsilon_n\}$ is a strictly
decreasing sequence of positive numbers convergent to $1$. Now for
each $n\in\N$, we consider the open set $W_n\subset\C$ defined by
the formula:
\begin{equation*}
W_n=\{-1+re^{i\beta}:-\alpha<\beta<\alpha,\
\cos\beta-\sqrt{\cos^2\beta-\cos^2\alpha}<r<\epsilon_n\}.
\end{equation*}
It is easy to see that $W_n$ is convex and therefore connected, open
and contains $0$. The following picture shows the set $W_n$.

\def\vvv{\vrule width0pt height10pt depth 6pt}
\begin{pspicture}(-.5,-4)(7,4)
\psarc*[linewidth=0pt,linecolor=lightgray](5,0){2.46}{120}{240}
\psarc*[linewidth=0pt,linecolor=lightgray](0,0){6.21}{-30}{30}
\pspolygon*[linewidth=0pt,linecolor=lightgray](3.75,2.165)(5.413,3.125)(5.413,-3.125)(3.75,-2.165)
\psline[linewidth=.3pt](-.4,0)(7.8,0)
\psline[linewidth=.4pt](0,0)(6,3.464)
\psline[linewidth=.4pt](0,0)(6,-3.464)
\psline[linewidth=1.5pt]{<-}(3.75,2.165)(5.413,3.125)
\psline[linewidth=1.5pt]{->}(3.75,-2.165)(5.413,-3.125)
\pscircle[linewidth=.4pt](5,0){2.5}
\psarc[linewidth=1.5pt]{->}(5,0){2.5}{120}{240}
\psarc[linewidth=1.5pt]{->}(0,0){6.25}{-30}{30} \put(-.2,-.4){$-1$}
\psdot(5,0)\psdot(0,0)\put(4.9,-.4){$0$} \put(.6,.1){$\alpha$}
\put(.6,-.28){$\alpha$}  \put(5.25,-3.5){$A$} \put(5.25,3.3){$B$}
\put(3.5,2.3){$C$}\put(3.5,-2.57){$D$} \put(6.35,0.5){$\Gamma_1$}
\put(2.1,.5){$\Gamma_3$} \put(4.3,2.8){$\Gamma_2$}
\put(4.3,-3){$\Gamma_4$} \put(9.2,0){$\begin{array}{l}\llap{{\rm with\ \,}}\text{$W_n$ being the gray area,}\vvv \\ A=-1+\epsilon_ne^{-i\alpha},\vvv\\
B=-1+\epsilon_ne^{i\alpha}\vvv\\ C=-1+(\cos\alpha)e^{i\alpha},\vvv
\\ D=-1+(\cos\alpha)e^{-i\alpha}, \vvv\\ \Gamma_1=\{-1+\epsilon_ne^{it}:t\in[-\alpha,\alpha]\},\vvv
\\ \Gamma_2=\{-1-te^{i\alpha}:t\in[-\epsilon_n,-\cos\alpha]\},\vvv \\
\Gamma_3=\{(\sin\alpha)e^{it}:t\in[\frac\pi2+\alpha,\frac{3\pi}2-\alpha]\}\vvv
\\ \llap{{\rm and\ \,}}\Gamma_4=\{-1+te^{-i\alpha}:t\in[\cos\alpha,\epsilon_n]\}.\vvv
\end{array}$}
\end{pspicture}

The boundary $\partial W_n$, oriented in such a way that it
encircles $W_n$ counterclockwise, is the concatenation of 4 oriented
paths $\partial W_n=\Gamma_1\cup\Gamma_2\cup\Gamma_3\cup\Gamma_4$
defined above. Clearly $\Gamma_1$ is an arc of the circle
$-1+\epsilon_n\T$, $\Gamma_3$ is an arc of the circle
$(\sin\alpha)\T$, while $\Gamma_2$ and $\Gamma_4$ are intervals of
the straight lines $-1+e^{i\alpha}\R$ and $-1+e^{-i\alpha}\R$
respectively. In each case the parametrization is chosen to agree
with the right orientation. First, observe that the farthest from
$0$ points of $\partial W_n$ are $B=-1+\epsilon_ne^{i\alpha}$ and
$A=-1+\epsilon_ne^{-i\alpha}$. Hence $W_n$ is contained in the disk
$|-1+\epsilon_ne^{i\alpha}|\,\D$. Since
$|-1+\epsilon_ne^{i\alpha}|\to
|-1+e^{i\alpha}|=2\sin\frac\alpha2<\alpha$ as $n\to\infty$, we have
\begin{equation}\label{wn1}
\text{$W_n\subset\alpha\D\subset\delta\D$ for each sufficiently
large $n$.}
\end{equation}
Since $\alpha<1$, we also have $W_n\subset\D$ for $n$ large enough.
Since $|p(z)|\leq c$ for $z\in\D$, $|(1+z)^n|\leq 2c$ for $z\in
-1+\epsilon_n\D$ and $W_n\subset -1+\epsilon_n\D$, we see that
$|(1+z)^n-p(z)|\leq 3c$ for all $z\in W_n$ for all sufficiently
large $n$. Since $\alpha<\frac1{3kc}$ and $\sup\limits_{z\in
W_n}|z|<\alpha$ for all $n$ large enough, we have
$|q_n(z)|<k\alpha|(1+z)^n-p(z)|\leq 3ck\alpha<1$ for $z\in W_n$ for
all sufficiently large $n$. Hence
\begin{equation}\label{wn2}
\text{$q_n(W_n)\subseteq\D$ for each sufficiently large $n$.}
\end{equation}
According to (\ref{wn1}) and (\ref{wn2}), it suffices to show that
$r_n=(1+z)^n-p$ has at least $m$ zeros in $W_n$ for each
sufficiently large $n$. Since $r_n$ have no zeros on the rays
$\{-1+te^{i\alpha}:t>0\}$ and $\{-1+te^{-i\alpha}:t>0\}$ for every
$n\geq n_0$, $r_n$ have no zeros on $\Gamma_2\cup\Gamma_4$ for all
$n$ large enough. Since $|(1+z)^n|=2c$ for $z\in \Gamma_1$ and
$|p(z)|\leq c$ for $z\in\Gamma_1$ ($\Gamma_1\subset\D$ for $n$ large
enough), we see that $r_n(z)\neq 0$ for $z\in\Gamma_1$ for all
sufficiently large $n$. Since $\Gamma_3\subset (\sin\alpha)\T$ and
$p$ has no zeros on the circle $(\sin\alpha)\T$,
$\min\limits_{z\in\Gamma_3}|p(z)|=c_0>0$. It is easy to see that
$\Gamma_3$ does not depend on $n$ and is a compact subset of the
disk $-1+\D$. Hence $(1+z)^n$ converges uniformly to 0 on $\Gamma_3$
as $n\to\infty$. Thus $|p(z)|>|(1+z)^n|$ and therefore $r_n(z)\neq
0$ for $z\in\Gamma_3$ for all $n$ large enough. Summarizing, we see
that
\begin{equation*}
\text{$0\notin r_n(\partial W_n)$ for each sufficiently large $n$.}
\end{equation*}
By the argument principle and (\ref{addi}), the number $\nu(n)$ of
zeros of $r_n$ in $W_n$ satisfies
\begin{equation}\label{wn4}
\nu(n)=w(r_n,\partial W_n)=\sum_{j=1}^4 w(r_n,\Gamma_j)\ \ \text{for
all sufficiently large $n$}.
\end{equation}
Since on each of $\Gamma_2$ and $\Gamma_4$, the function $(1+z)^n$
takes values in a line in $\C$ passing through zero and $\Gamma_2$
and $\Gamma_4$ are intervals of straight lines, Lemma~\ref{strpo}
implies that
\begin{equation}\label{wn5}
\textstyle |w(r_n,\Gamma_2)|< \frac{\deg p+1}2\ \ \text{and}\ \
|w(r_n,\Gamma_4)|< \frac{\deg p+1}2\ \ \text{for every sufficiently
large $n$.}
\end{equation}
Since $|(1+z)^n|<|p(z)|$ for $z\in\Gamma_3$ for any $n$ large
enough, (\ref{appro}) implies that
\begin{equation}\label{wn6}
\textstyle |w(r_n,\Gamma_3)|< |w(p,\Gamma_3)|+\frac12\ \ \text{for
every sufficiently large $n$.}
\end{equation}
Finally, since $|p(z)|<|(1+z)^n|$ for $z\in\Gamma_1$ for any $n$
large enough, (\ref{appro}) implies that
$$
\textstyle w(r_n,\Gamma_1)>w((1+z)^n,\Gamma_1)-\frac12\ \ \text{for
every sufficiently large $n$.}
$$
A  direct computation shows that $w((1+z)^n,\Gamma_1)=2n\alpha$.
Hence by the last display,
\begin{equation}\label{wn7}
\textstyle w(r_n,\Gamma_1)>2n\alpha-\frac12\ \ \text{for every
sufficiently large $n$.}
\end{equation}
Combining (\ref{wn4}--\ref{wn7}), we get
$$
\nu(n)>2n\alpha-2-|w(p,\Gamma_3)|-\deg p\ \ \ \text{for every
sufficiently large $n$}.
$$
Since $\Gamma_3$ does not depend on $n$, $\nu(n)\to\infty$ as
$n\to\infty$. Hence $r_n$ and therefore $q_n$ has at least $m$ zeros
in $W_n$ for each $n$ large enough. The proof of
Lemma~\ref{techn-terr} and that of Theorem~\ref{main} is complete.

\section{Remarks and open questions}

\noindent{\bf 1.}\ \ Our construction of a chaotic Banach algebra
provides little control over its Banach space structure. Thus the
following interesting questions arise.

\begin{question}\label{q1} Which separable infinite dimensional
Banach spaces admit a multiplication turning them into a supercyclic
or into an almost hypercyclic Banach algebra? In particular, is
there a multiplication on $\ell_2$, turning it into a chaotic Banach
algebra?
\end{question}

\noindent{\bf 2.}\ \ The structural properties of the class of
supercylic or almost hypercyclic Banach algebras remain a complete
mystery.

\medskip

\noindent{\bf 3.}\ \ Let $\hh$ be the Hilbert space of
Hilbert--Schmidt operators on $\ell_2$. With respect to the
composition multiplication, $\hh$ is a non-commutative non-unital
Banach algebra. Let also $S\in\hh$ be defined by its action on the
basic vectors as follows: $Se_0=0$, $Se_n=n^{-1}e_{n-1}$ if $n\geq
1$. Consider the left multiplication by $S$ operator $\Phi\in
L(\hh)$, $\Phi(T)=ST$. Using the hypercyclicity and supercyclicity
criteria \cite{bama-book}, it is easy to see that $\Phi$ is
supercyclic and $I+\Phi$ is hypercyclic. Thus supercyclicity of a
multiplication operator and hypercyclicity of a perturbation of the
identity by a multiplication operator on a non-commutative Banach
algebra is a much simpler phenomenon.

\medskip

\noindent{\bf 4.}\ \ We would also like to raise the following
question. We say that a Banach algebra $A$ is {\it wildly chaotic}
if it has a supercyclic element $a$ such that for every $z\in\T$,
the set $\{a(z+a)^n:n\in\N\}$ is dense in $A$.

\begin{question}\label{q2} Does there exist a wildly chaotic infinite
dimensional Banach algebra?
\end{question}

\noindent Note that our construction can be modified to make
$\{a(z+a)^n:n\in\N\}$ dense in $A$ for each $z$ from a given
countable subset of $\T$.

\medskip

\noindent{\bf 5.}\ \ Corollary~\ref{smallsp} ensures the existence
of a hypercyclic operator $T$ with $\sigma(T)=\{1\}$  and $T\oplus
T$ being non-cyclic. This naturally leads to the question whether
such operators exist on every separable infinite dimensional Banach
space.

\begin{question}\label{q3} Let $X$  be a separable infinite dimensional Banach
space. Does there exist a $T\in L(X)$ such that $T$ is hypercyclic,
$T\oplus T$ is non-cyclic and $\sigma(T)=\{1\}$? What is the answer
for $X=\ell_2$?
\end{question}

The above question is related to the following question of Bayart
and Matheron \cite{bama}.

\begin{question}\label{q4} Does every separable infinite dimensional Banach
space admit a hypercyclic operator $T$ such that $T\oplus T$ is
non-cyclic?
\end{question}

\noindent{\bf 6.}\ \ Bayart and Matheron \cite{bama-book} ask
whether there exists a hypercyclic strongly continuous operator
semigroup $\{T_t\}_{t\geq 0}$ on a Banach space $X$ such that the
semigroup $\{T_t\oplus T_t\}_{t\geq 0}$ acting on $X\oplus X$ is
non-hypercyclic. As we have already mentioned, Theorem~\ref{main}
provides a quasinilpotent operator $M_a$ on the Banach space $A$
such that $I+M_a$ is hypercyclic, while $(I+M_a)\oplus (I+M_a)$ is
non-hypercyclic. Since $M_a$ is quasinilpotent,
$$
S=\ln(I+M_a)=\sum_{n=1}^\infty \frac{(-1)^{n-1}}{n}M_a^n
$$
is a well-defined (also quasinilpotent) continuous linear operator
on $A$. Hence we can consider the operator norm continuous semigroup
$\{e^{tS}\}_{t\geq 0}$, which contains all powers of $I+M_a$:
$e^{nS}=(I+M_a)^n$ for $n\in\N$. It follows that $\{e^{tS}\}_{t\geq
0}$ is hypercyclic. On the other hand, $e^S\oplus e^S=(I+M_a)\oplus
(I+M_a)$ is a non-hypercyclic member of the semigroup
$\{e^{tS}\oplus e^{tS}\}_{t\geq 0}$. According to Conejero, M\"uller
and Peris \cite{semi}, $T_t$ is hypercyclic for every $t>0$ if
$\{T_t\}_{t\geq 0}$ is a hypercyclic strongly continuous operator
semigroup. Hence $\{e^{tS}\oplus e^{tS}\}_{t\geq 0}$ is
non-hypercyclic which answers negatively the above mentioned
question of Bayart and Matheron.

\small\rm








\end{document}